\author{Julia Brandes}
\title[Systems of quadratic and cubic diagonal equations]{The Hasse principle for systems of quadratic and cubic diagonal equations}
\address{Mathematical Sciences, Chalmers Institute of Technology and University of Gothenburg, 412 96 G{\"o}teborg, Sweden}
\email{brjulia@chalmers.se}
\subjclass[2010]{Primary: 11D72. Secondary: 11D45, 11P55.}
\def\Z{\mathbb Z}
\def\Q{\mathbb Q}
\def\R{\mathbb R}
\def\T{\mathbb T}
\def\B#1{\mathbf{#1}}
\def\ba{\bm{\alpha}}
\def\bb{\bm{\beta}}
\def\bg{\bm{\gamma}}
\def\bd{\bm{\delta}}
\def\F#1{\mathfrak{#1}}
\def\cal#1{\mathcal{#1}}
\def\D{\;\mathrm{d}}
\def\a0{\alpha_0}
\def\eps{\varepsilon}
\def\sm#1{\scaleobj{.5}{#1}}
\def\ssq{\scaleobj{.5}{\square}}
\DeclareMathOperator{\Id}{Id}
\DeclareMathOperator{\diag}{diag}
\renewcommand{\le}{\leqslant}
\renewcommand{\ge}{\geqslant}
\newtheorem{thm}{Theorem}[section]
\newtheorem{lem}[thm]{Lemma}
\newtheorem{prop}[thm]{Proposition}
\theoremstyle{definition}
\theoremstyle{remark}
\numberwithin{equation}{section}
\newenvironment{pf}{\begin{proof}[Proof]}{\end{proof}}
\begin{document}
\selectlanguage{english}

\begin{abstract}
Employing Br\"udern's and Wooley's new complification method, we establish an asymptotic Hasse principle for the number of solutions to a system of $r_3$ cubic and $r_2$ quadratic diagonal forms, where $r_3 \ge 2r_2>0$, in $s \ge 6r_3+\lfloor (14/3)r_2\rfloor + 1$ variables.
\end{abstract}
\maketitle

\section{Introduction}
In this memoir we are concerned with systems of diophantine equations of the shape
\begin{align}\label{sys}
  \sum_{i=1}^s c_{j,i}^{(2)} x_i^2 = \sum_{i=1}^s c_{h,i}^{(3)}x_i^3 =0 \qquad (1 \le j \le r_2, 1 \le h \le r_3),
\end{align}
where $c_{i,j}^{(k)}$ are integers.
It is commonly acknowledged that, unless fundamentally new ideas become available that avoid the implicit use of mean values, at least $6r_3 + 4r_2 + 1$ variables are required in order to establish asymptotic estimates for the number of integral solutions of the system \eqref{sys}. This theoretical limit has recently been attained by Wooley \cite[Theorem 1.1]{W:15sae5} in the case $r_2 = r_3 = 1$, and by the author jointly with Parsell \cite[Theorem 1.4]{BP:16} for systems consisting of $r_2 \ge 1$ quadratic forms and one cubic equation. The latter work applies a disentangling argument going back in its essence to the methods of Davenport and Lewis \cite{DL:69}, and provides estimates for the number of variables required to establish a Hasse principle and asymptotic formul{\ae} for the number of solutions of general systems of additive equations involving different degrees with arbitrary multiplicities.
In the case of purely cubic systems, these classical methods hit a boundary when it comes to establishing solubility of systems of $r_3$ equations in fewer than roughly $7r_3$ variables (see e.g. Br\"udern and Cook \cite{BC:92}). However, new ideas have recently become available in the work of Br\"udern and Wooley \cite{BW:16corr, BW:16h3} that achieve essentially square root cancellation in this case. This opens up the possibility that a modification of their methods may lead to stronger bounds for mixed systems of cubic and quadratic equations also. The objective of this paper is to carry out these modifications and establish asymptotic formul{\ae} for the number of solutions to such mixed systems consisting of at least twice as many cubic as quadratic equations, using fewer variables than hitherto necessary.

For a large integer $P$ let $N(P)$ denote the number of integral vectors $\B x \in [-P,P]^s$ satisfying \eqref{sys}. It is clear that a non-singularity condition of some sort is required to ensure that the equations in \eqref{sys} do not interact in any non-generic way. We say that an $r \times s$ matrix $A$ is \emph{highly non-singular} if any collection of $r$ columns of $A$ forms a non-singular submatrix. In this notation our result is as follows.
\begin{thm}\label{thm}
  Let $r_3 \ge 2 r_2 >0$ and $s \ge  6r_3+\lfloor (14/3)r_2\rfloor + 1$, and suppose that the matrices $C^{(2)} = (c_{j,i}^{(2)})$ and $C^{(3)}=(c_{h,i}^{(3)})$ are highly non-singular. Then one can find a parameter $\delta>0$ such that
  \begin{align*}
      N(P) = (c + O(P^{-\delta})) P^{s-3r_3-2r_2},
  \end{align*}
  where $c$ is a non-negative constant encoding the density of real and $p$-adic solutions to the system \eqref{sys}.
\end{thm}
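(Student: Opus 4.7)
The plan is to apply the Hardy-Littlewood circle method, the crucial innovation being the adaptation of Br\"udern and Wooley's complification technique from \cite{BW:16corr, BW:16h3} to the mixed quadratic-cubic setting. Writing $\ba = (\bb, \bg) \in \T^{r_2+r_3}$, introduce the exponential sums
\[
  f_i(\ba) = \sum_{|x| \le P} e\Bigl(\sum_{j=1}^{r_2} \beta_j c_{j,i}^{(2)} x^2 + \sum_{h=1}^{r_3} \gamma_h c_{h,i}^{(3)} x^3\Bigr),
\]
so that $N(P) = \int_{\T^{r_2+r_3}} \prod_{i=1}^s f_i(\ba)\, \D \ba$. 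The torus is dissected into a set of major arcs $\F{M}$, designed around rational points with small denominators and with the precise shape dictated primarily by the cubic frequencies, together with their minor-arc complement $\F{m}$.

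On $\F{M}$ a standard analysis exploiting the high non-singularity of $C^{(2)}$ and $C^{(3)}$ yields the expected asymptotic $\F{S} J P^{s-3r_3-2r_2}$, with $\F{S}$ the singular series and $J$ the singular integral; the non-negative constant $c = \F{S}J$ encodes the densities of real and $p$-adic solutions to \eqref{sys}. A pruning argument, combining Weyl bounds for individual sums with elementary mean value estimates, reduces the contribution of a wider intermediate family of arcs down to the narrow major arcs and supplies the error saving $P^{-\delta}$.

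The core of the argument lies in the minor arc estimate. Here I would partition the $s$ variables into two groups: a set of approximately $6r_3$ variables used to execute a Br\"udern-Wooley mean value of the cubic exponential sums over $\bg$, delivering essentially square-root cancellation of the form $O(P^{3r_3+\eps})$, and a further $\lfloor 14r_2/3\rfloor$ variables handled by Hua-Weyl-type mean values dedicated to the quadratic frequencies $\bb$; a H\"older-type inequality then combines these ingredients into a saving of $P^\delta$ on $\F{m}$. The principal obstacle is that the complification machinery is designed for purely cubic systems and is distorted by the additional quadratic phases present in each $f_i$. To overcome this I would employ a disentangling procedure in the spirit of Davenport-Lewis \cite{DL:69} and of Brandes-Parsell \cite{BP:16}, freezing a carefully chosen subset of variables so that the complification can be applied to the cubic sums uniformly across the quadratic analysis; the hypothesis $r_3 \ge 2 r_2$ is what makes this disentangling viable, and the loss incurred explains the ratio $14/3$ per quadratic in the hypothesis, as opposed to the value $4$ that would suffice in a purely quadratic Hasse principle.
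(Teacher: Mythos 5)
Your proposal departs from the paper at the most critical point, and the departure is a gap rather than a viable alternative route. You propose to \emph{separate} the cubic and quadratic frequencies on the minor arcs: roughly $6r_3$ variables are to be fed into a pure-cubic Br\"udern--Wooley mean value (giving $P^{3r_3+\eps}$), and $\lfloor 14r_2/3\rfloor$ variables into Hua--Weyl estimates ``dedicated to the quadratic frequencies,'' with a H\"older combination and a Davenport--Lewis/Brandes--Parsell disentangling step to make this separation possible. The difficulty is that each exponential sum $f_i(\ba)$ carries \emph{both} the cubic phases $\alpha_{3,h}$ and the quadratic phases $\alpha_{2,j}$, and the minor-arc integral is over all of $\T^{r_2+r_3}$. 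No H\"older inequality over the index set of the variables factors this into an integral over the cubic frequencies alone times one over the quadratic frequencies alone. The disentangling argument of \cite{DL:69} and \cite{BP:16} \emph{can} break the coupling, but precisely at a cost: this is the route that gives only $s \ge 8r_3+\lfloor(8/3)r_2\rfloor+1$, not the bound claimed in the theorem.

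The paper's key innovation is the opposite of what you propose: it keeps the cubic and quadratic phases coupled throughout. Section 3 runs the Br\"udern--Wooley complification directly on the mixed Weyl sums $f(\alpha,\beta)=\sum_x e(\alpha x^3+\beta x^2)$, and the Cauchy--Schwarz iteration in Lemma~\ref{comp} is powered not by a pure-cubic estimate but by Wooley's mixed tenth-moment bound
\[
\int_0^1\!\!\int_0^1 |f(\alpha,\beta)|^{10}\,\D\alpha\,\D\beta \ll P^{31/6+\eps},
\]
which is essentially square-root cancellation for the \emph{joint} phase. The pure-cubic auxiliary mean values of Section 2 enter only at the endpoint of the iteration, where the quadratic contribution has been shrunk to an exponentially small weight and is estimated trivially. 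This is what produces the mean value $\ll P^{3r_3+(13/6)r_2+\eps}$ over $6r_3+4r_2$ variables, and the extra $\lfloor(2/3)r_2\rfloor+1$ minor-arc variables are then used to compensate the $r_2/6$ loss over square-root cancellation via Weyl savings of $P^{3/4}$ each; that is the true origin of the $14/3$. Your remark that ``the complification machinery is designed for purely cubic systems and is distorted by the additional quadratic phases'' is therefore exactly backwards --- carrying the quadratic phases along is what makes the argument work. Finally, the hypothesis $r_3 \ge 2r_2$ is not, as you suggest, a condition enabling disentangling; it is the structural requirement $r \ge 2l$ on the auxiliary matrices in Proposition~\ref{silly}, needed so that the linked-block decomposition and the inductive lemmas of Section~2 are well-posed.
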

For comparison, Theorem 1.4 of \cite{BP:16} establishes the same conclusion under the more stringent hypothesis that $s \ge 8r_3+\lfloor(8/3)r_2\rfloor +1$, and proves a Hasse principle without asymptotic formula for $s \ge 7r_3 + \lceil (11/3)r_2 \rceil$.
Observe in particular that in the case $r_2=1$ Theorem \ref{thm} yields a bound on the number of variables given by $s \ge 6r_3+5 = 2(3r_3 + 2)+1$, so for systems of one quadratic and $r_3 \ge 2$ cubic equations we attain the theoretical limit imposed by square root cancellation.

Two points deserve further remarks. Firstly, one notes the slightly irritating hypothesis $r_3 \ge 2 r_2$. This is a technical condition arising from the idiosyncrasies of the method, which it does not seem easy to circumvent.
Secondly, we are making no statement here as to whether or not the constant $c$ is actually positive. In \S \ref{CM} we will see that $c$ can be written as a product of the solution densities of \eqref{sys} over the completions of $\Q$, where each individual factor is positive if the system \eqref{sys} has a non-singular solution in the respective local field. Unfortunately, the conditions required to guarantee local solubility are typically much more stringent than what is needed to establish local-global principles. For instance, the work of Knapp \cite{Knapp:09} shows that systems of the type \eqref{sys} have non-trivial $p$-adic solutions for all odd primes $p$ whenever $s>(75/2)(r_2+r_3)^3$. Whilst it would be desirable to establish bounds of the quality $s > 4r_2+9r_3$ as  conjectured by Artin, in the light of Wooley's work \cite{W:15ac} it is not clear whether such a result is even feasible to aim for. Nonetheless, even the bound hypothesised by Artin would require more variables for $p$-adic solubility than we need for a local-global principle.\\


The following notational conventions will be observed throughout the paper. Any expression involving the letter $\eps$ will be true for any (sufficiently small) $\eps > 0$. Consequently, no effort will be made to track the respective `values'  of $\eps$. Also, any statement involving vectors is to be understood componentwise. In this spirit, we write $(q,\B b)= \gcd (q,b_1, \ldots, b_n)$ whenever $\B b \in \Z^n$, and we interpret a vector inequality of the shape $C \le \B b \le D$ to mean that $C \le b_i \le D$ for $i=1, \ldots, n$. Write $\Id_k$ for the $k \times k$ identity matrix, and set $\T = \R/\Z$. Finally, all implied constants may depend on $s$, $r_2$ and $r_3$ as well as the coefficient matrices $C^{(3)}$ and $C^{(2)}$, but are independent of $P$, which we take to be a large integer.\\

The author is very grateful to the referee, whose comments led to a greatly improved paper.

\section{Totally non-singular matrices and auxiliary mean values}

For the proof of Theorem \ref{thm} we make use of auxiliary matrices similar to those introduced by Br\"udern and Wooley \cite{BW:16corr}. We call an $r \times s$ matrix $A$ \emph{highly non-singular} if $s \ge r$ and every collection of $r$ columns of $A$ is linearly independent, and \emph{totally non-singular} if $A$ has no vanishing minors.
\begin{lem}\label{hns-prop}
    We have the following properties of highly non-singular matrices.
    \begin{enumerate}[(i)]
        \item
            \begin{enumerate}[(a)]
                \item Any matrix obtained from a highly non-singular matrix via elementary row operations is also highly non-singular.
                \item Suppose that $A$ is a highly non-singular $r \times s$ matrix with $s \ge r+1$, then the matrix obtained from $A$ by deleting an arbitrary column is also highly non-singular.
                \item If $A$ is highly non-singular with the property that one of $A$'s columns contains only one non-zero element, then the matrix obtained from $A$ by removing that column and the row containing the non-zero element is also highly non-singular.
            \end{enumerate}
        \item
            An $r \times s$ matrix $B$ is totally non-singular if and only if the $r \times (r+s)$ matrix $(\Id_r | B)$ is highly non-singular.
    \end{enumerate}
\end{lem}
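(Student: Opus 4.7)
The plan is to treat the five assertions separately, since each reduces to a short determinant calculation. For part (i)(a), I would recall that the three elementary row operations—swapping two rows, scaling a row by a non-zero scalar, and adding a multiple of one row to another—all preserve the non-vanishing of every $r \times r$ minor (swapping flips the sign, scaling multiplies by a non-zero constant, and the third operation leaves the determinant invariant). Since high non-singularity asserts that every choice of $r$ columns yields a non-singular submatrix, this is immediate. For (i)(b), deleting a column from $A$ produces an $r \times (s-1)$ matrix whose $r$-column subfamilies form a sub-collection of those of $A$, so no new singularity can appear.

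For (i)(c), the argument is slightly more delicate. Suppose that column $j_0$ of $A$ has a single non-zero entry, located in row $i_0$, and let $A'$ be obtained from $A$ by removing row $i_0$ and column $j_0$. Given any choice of $r-1$ columns of $A'$, I would augment these columns (lifted back to full height) with column $j_0$ of $A$ to form an $r \times r$ submatrix of $A$. Laplace expansion along column $j_0$ expresses its determinant as $\pm$ (the non-zero entry in row $i_0$) times the determinant of the selected $r-1$ columns of $A'$. High non-singularity of $A$ forces the left-hand side to be non-zero, hence so is the $(r-1)\times(r-1)$ minor of $A'$, giving the claim.

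For part (ii), the central observation is that the $r \times r$ submatrices of $(\Id_r | B)$ are in bijection, up to sign, with the square minors of $B$ of all sizes. Specifically, choosing $r-k$ columns from the $\Id_r$-block (indexed by some $I \subset \{1,\ldots,r\}$ of size $r-k$) together with $k$ columns from $B$ (indexed by some $J \subset \{1,\ldots,s\}$ of size $k$) yields a submatrix whose determinant, after successive Laplace expansions along the $\Id_r$-columns, equals $\pm$ the $k \times k$ minor of $B$ with rows in $\{1,\ldots,r\}\setminus I$ and columns in $J$. Conversely, every square minor of $B$ arises in this way for suitable $I$ and $J$. Therefore high non-singularity of $(\Id_r | B)$ is equivalent to the non-vanishing of all square minors of $B$, which is exactly the definition of total non-singularity.

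The whole lemma is essentially elementary linear algebra, so there is no genuine obstacle; the only step requiring care is the bookkeeping of rows and columns in the cofactor expansions of (i)(c) and (ii), and in particular keeping track of which rows of $B$ are deleted when expanding along $\Id_r$-columns in the proof of (ii).
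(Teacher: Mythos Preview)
Your proposal is correct and is essentially what the paper has in mind: the paper's own proof merely asserts that (i)(a)--(c) are immediate from the definition (referring to \cite{BW:16h3}) and that (ii) is a trivial generalisation of a lemma in \cite{BW:16corr}, so you have simply supplied the short determinant/cofactor details that the paper suppresses. No different idea is involved.
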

\begin{proof}
    The first three statements are immediate from the definition of high non-singularity (see also Lemma~2.1 in \cite{BW:16h3}), and the latter statement is a trivial generalisation of Lemma~3.1 of \cite{BW:16corr}.
\end{proof}

For natural numbers $n \ge 2$, $l$ and  $i_1 < \ldots < i_n$ and $j_1 < \ldots < j_n$ we call $D$ a \emph{linked-block matrix of type $(n,l)$} if $D$ is obtained from an $i_1 \times j_1$ matrix $A_1$, and $(i_m-i_{m-1}+l) \times (j_m-j_{m-1})$ matrices $B_m$ with their lower right corner at $(i_m, j_m)$ for $2 \le m \le n$. The matrix $D$ should be thought of as having been obtained from a conventional block matrix $\diag(A_1, A_2, \ldots, A_n)$ composed of matrices $A_m$ of format $(i_m-i_{m-1}) \times (j_m-j_{m-1})$, where adjacent blocks are connected by $l$ linking rows.

Let $V_k$ be a totally non-singular matrix of format $r \times (r-l)$ when $2 \le k \le n$, and of format $t \times (t-l+\omega)$ for $k=1$. Here and henceforth we will assume that
\begin{align*}
    r \ge 2l, \quad t \ge l, \quad 0 \le \omega  \le l.
\end{align*}
Now use the matrices $V_k$ to form the linked block matrix $V$ of type $(n, l)$. We call $D$ an \emph{auxiliary matrix of type $(n,t,\omega)_{r,l}$} if $D$ is of block shape $D = (U, V)$, where $V$ is a linked-block matrix as above and $U$ is a non-singular diagonal matrix of size $(n-1)(r-l)+t$. Then $D$ is a matrix of format $R \times S$ with $R=(n-1)(r-l)+t$ and $S = 2R-l+\omega$. For instance, the reader may check that the matrix
\begin{align*}
    \left(\begin{smallmatrix}1&9&1 \\ 2&7&7\\ 8&4&3\\ 3&1&7\\3&7&9
    \end{smallmatrix}\right)
\end{align*}
is totally non-singular, and thus the matrix
\begin{align*}
    \left(\begin{smallmatrix}
        1& & & & & & & & & & &1&9&1& & & & & & \\
         &1& & & & & & & & & &2&7&7& & & & & & \\
         & &1& & & & & & & & &8&4&3& & & & & & \\
         & & &1& & & & & & & &3&1&7&1&9&1& & & \\
         & & & &1& & & & & & &3&7&9&2&7&7& & & \\
         & & & & &1& & & & & & & & &8&4&3& & & \\
         & & & & & &1& & & & & & & &3&1&7&1&9&1\\
         & & & & & & &1& & & & & & &3&7&9&2&7&7\\
         & & & & & & & &1& & & & & & & & &8&4&3\\
         & & & & & & & & &1& & & & & & & &3&1&7\\
         & & & & & & & & & &1& & & & & & &3&7&9
     \end{smallmatrix} \right)
\end{align*}
is an auxiliary matrix of type $(3,5,0)_{5,2}$. Here we followed the convention that zero entries be omitted. Were one to delete the first one or two rows and columns, one would end up with an auxiliary matrix of type $(3,4,1)_{5,2}$ or $(3,3,2)_{5,2}$, respectively.
Note that our definition of an auxiliary matrix has been simplified compared to that of Br\"udern and Wooley \cite{BW:16corr} so as to make the following arguments somewhat slicker; it turns out that this can be done without adding any significant complications later in the argument. We also remark that the definition of auxiliary matrices can be extended in the natural way to the case $n=1$.

Let now $D$ be an auxiliary matrix of type $(n,t,\omega)_{r,l}$. We define the cubic exponential sum
\begin{align*}
 g(\eta) = \sum_{x \in [-P, P]} e(\eta x^3)
\end{align*}
and make the change of variables
\begin{align}\label{chvar}
 \theta_j = \sum_{i=1}^R d_{i,j} \eta_i.
\end{align}
Our first goal is a bound for the mean value
\begin{align*}
 I(P,D) = \oint \prod_{i=1}^R |g(\theta_{i})|^2 \prod_{i=R+1}^S |g(\theta_{i})|^4 \D {\bm \eta},
\end{align*}
where we introduced the shorthand notation $\oint$ for the integral over the (in this case) $R$-dimensional unit cube.
For convenience, we will write $I_{n,t}^{\omega}(P) = \sup I(P,D)$, where the supremum is taken over all auxiliary matrices $D$ of type $(n,t,\omega)_{r,l}$; the respective values of $r$ and $l$ will stay fixed throughout the argument.

\begin{prop}\label{silly}
    Suppose that $r \ge 2l$. For all integral parameters $n \ge 1$, $t \ge l$ and $0 \le \omega  \le l$ we have
    \begin{align*}
        I_{n,t}^{\omega}(P) \ll P^{3((n-1)(r-l)+t +\omega) -2l  + \eps}.
    \end{align*}
\end{prop}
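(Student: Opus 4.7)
My approach is induction on $n$, with the base case $n=1$ serving as the central technical input and the inductive step $n-1 \to n$ effecting a clean reduction by peeling off the last block.

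For the base case, I would first absorb the non-singular diagonal matrix $U_1$ via the change of variables $\alpha_i = u_i \eta_i$, which preserves the integral on $\T^t$ by periodicity and the integrality of $U_1$'s entries. This recasts $I_{1,t}^\omega(P)$ as a mean value whose coefficient matrix is $(\Id_t \mid V_1 U_1^{-1})$. Since $V_1 U_1^{-1}$ is still totally non-singular (multiplication of columns by non-zero scalars preserves this property), Lemma~\ref{hns-prop}(ii) identifies the coefficient matrix as highly non-singular of format $t \times (2t-l+\omega)$. The bound $P^{3(t+\omega) - 2l + \varepsilon}$ then follows from a direct application of the Br\"udern-Wooley mean value estimate for highly non-singular matrices, which constitutes the essential input from the complification framework developed in \cite{BW:16corr}.

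For the inductive step, let $D'$ denote the sub-matrix of type $(n-1,t,\omega)_{r,l}$ obtained by removing the last block $V_n$, and let $R' = R - (r-l)$ be its row count. The linked-block structure ensures that the $r-l$ new linear forms $L_j$ coming from $V_n$ depend only on the variables $\eta_{R'-l+1}, \ldots, \eta_R$, so the integrand factors as
\begin{equation*}
I(P, D) = \int F(\eta_1, \ldots, \eta_{R'}) \, J_n(\eta_{R'-l+1}, \ldots, \eta_{R'}) \, d\eta_1 \cdots d\eta_{R'},
\end{equation*}
where $F$ is the integrand of $I(P, D')$ and $J_n$ is an inner integral over the $r-l$ new variables parameterised by the $l$ linking variables. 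Expanding $J_n$ via orthogonality reveals it as a count of integer solutions to a system of $r-l$ cubic equations, twisted by a phase character $e(-\sum d_i(x_i^3 - y_i^3))$ in the linking variables. The triangle inequality therefore gives $|J_n| \le J_n(\mathbf{0})$, where the right-hand side is the untwisted count. A further change of variables, exploiting the fact that the square $(r-l) \times (r-l)$ bottom sub-block of $V_n$ is totally non-singular (and hence so is its inverse, via the classical complementary minor identity), recasts $J_n(\mathbf{0})$ as a mean value whose coefficient matrix is highly non-singular of size $(r-l) \times 2(r-l)$. This is precisely an instance of the base case with parameters $(1, r-l, 0)$ and linking parameter zero, yielding $J_n(\mathbf{0}) \ll P^{3(r-l) + \varepsilon}$. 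Combining with the inductive hypothesis $I(P, D') \le I_{n-1,t}^\omega(P) \ll P^{3R' + 3\omega - 2l + \varepsilon}$ gives $I(P, D) \le J_n(\mathbf{0}) \cdot I(P, D') \ll P^{3R + 3\omega - 2l + \varepsilon}$, completing the induction.

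The main obstacle is the base case itself: the naive combination of Cauchy-Schwarz and Hua's inequality falls short of the correct exponent by a factor of $P^{(r-l)/2}$ per block, so one must appeal to the full strength of the Br\"udern-Wooley complification method. That method's technical heart is a delicate interplay between major- and minor-arc estimates on the unit cube, carefully exploiting the highly non-singular structure of the coefficient matrix to secure the $P^3$-per-block saving that lies at the root of the entire argument.
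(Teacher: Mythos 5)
Your overall strategy---peel off the last block, decouple via the triangle inequality, and bound the residual ``new block'' integral by a base-case estimate---is cleaner than what the paper does, and the motivation for attempting it is sound. But the argument as written has two gaps, and they are not cosmetic.

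First, the base case. You assert that $I_{1,t}^\omega(P) \ll P^{3(t+\omega)-2l+\eps}$ ``follows from a direct application of the Br\"udern--Wooley mean value estimate for highly non-singular matrices.'' There is no such off-the-shelf statement available. After the change of variables you describe, the integrand is $\prod_{i=1}^{t}|g(\eta_i)|^2 \prod_{j} |g(\theta_j)|^4$, with some exponential sums raised to the power~$2$ and others to the power~$4$, and a coefficient matrix of format $t\times(2t-l+\omega)$. The Br\"udern--Wooley results in \cite{BW:16corr,BW:16h3} concern mean values in which each $|g|$ appears with uniform multiplicity; splitting $|g|^2$ and $|g|^4$ into single $|g|$ factors produces repeated columns, destroying high non-singularity, so their theorems do not apply. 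The paper proves the $n=1$ case through Lemma~\ref{ind0} (pruning via major/minor arcs and Hua's inequality) together with Lemma~\ref{ind3}, run as an inner induction on $(t,\omega)$; there is genuine work here that your appeal to an external black box elides.

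Second, and more seriously, the inductive step. Your decoupling gives $I(P,D) \le J_n(\mathbf 0)\cdot I(P,D')$, which is fine, but the bound you claim for $J_n(\mathbf 0)$ does not stand. The matrix underlying $J_n(\mathbf 0)$ is $(r-l)\times 2(r-l)$, i.e.\ $t = r-l$ and $t-l+\omega = r-l$; it is therefore an auxiliary matrix of type $(1,r-l,l)_{r,l}$, and the Proposition itself only promises
\begin{align*}
I_{1,r-l}^{l}(P) \ll P^{3(r-l)+3l-2l+\eps} = P^{3(r-l)+l+\eps}.
\end{align*}
You instead invoke ``the base case with parameters $(1,r-l,0)$ and linking parameter zero'' to claim $J_n(\mathbf 0)\ll P^{3(r-l)+\eps}$, but $l$ is a fixed global parameter of the whole argument---you cannot replace it by $0$ for one subintegral and keep it at $l$ everywhere else. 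The bound $P^{3(r-l)+\eps}$ is genuinely stronger than anything the Proposition furnishes for this type (by a factor of $P^l$), and even if it happened to be true it would need its own proof. Plugging in the bound that \emph{is} available gives
\begin{align*}
I(P,D) \ll P^{3(r-l)+l+\eps}\cdot P^{3((n-2)(r-l)+t+\omega)-2l+\eps} = P^{3((n-1)(r-l)+t+\omega)-l+\eps},
\end{align*}
which exceeds the target exponent by $l$. This factor of $P^l$ is precisely the loss that the paper's more elaborate machinery is designed to recover: rather than decoupling in one stroke, the paper passes from $(n,t,\omega)$ down to $(n,l,0)$ through a careful inner induction on $(t,\omega)$ ordered by $t+\omega$ (Lemmata~\ref{ind1}, \ref{ind3}, \ref{ind4}), splitting the solution count according to which diagonal variables coincide, diagonalising judiciously chosen subblocks, and inserting major/minor-arc estimates on single variables---and only at $(n,l,0)$, which is literally the same matrix as $(n-1,r,0)$, does it peel off a block without paying any penalty. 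Your proposal compresses this into one triangle-inequality step and therefore cannot close the induction.
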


The proof of the proposition is by an inductive argument distinguishing several cases. In the proofs we will repeatedly consider submatrices that arise from deleting a certain set of columns and rows. It will be convenient to denote the submatrix $(d_{i,j})$ of $D$ consisting of rows with indices $a \le i \le b$ and columns $c \le j \le d$ by $[a,b] \times [c,d]$.

\begin{lem}\label{ind1}
    Suppose that $t \ge l+1$ and $n \ge 2$, then we have
    \begin{align*}
        I_{n,t}^{0}(P) \ll  P^{3(t-l)+\eps} I_{n-1,r}^{0}(P) + P^\eps I_{n,t-1}^{1}(P).
    \end{align*}
\end{lem}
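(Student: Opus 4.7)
The plan is to reinterpret $I(P,D)$ combinatorially via orthogonality: expanding each factor $|g(\theta_j)|^{2k_j}$ as a sum over cubes and integrating the $e(\cdot)$-exponentials over $[0,1]^R$, we see that $I(P,D)$ counts tuples $(x_i,y_i) \in [-P,P]^2$ for $i \le R$ and $(\B a_j,\B b_j) \in [-P,P]^4$ for $j > R$ satisfying the linear system $D\B c = \B 0$, where $c_i = x_i^3 - y_i^3$ for $i \le R$ and $c_j = a_{j,1}^3 + a_{j,2}^3 - b_{j,1}^3 - b_{j,2}^3$ for $j > R$. Since $\omega = 0$, the first block $V_1$ has shape $t \times (t-l)$ with $t-l \ge 1$ pure (non-linking) rows, and for each such pure row $i \in \{1,\ldots,t-l\}$ the $i$-th equation takes the simple form
\[
u_{i,i} c_i + \sum_{k=1}^{t-l} v_{i,k} c_{R+k} = 0,
\]
involving only $c_i$ and the $c$'s corresponding to the columns of $V_1$.

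The main step is to split the count according to whether $(c_{R+1},\ldots,c_{R+t-l})$ vanishes or not. In the vanishing case, the pure equations force $c_i = 0$ for $i = 1,\ldots,t-l$ as well, yielding $O(P^{t-l})$ quadratic tuples via $x_i = y_i$ and $O(P^{(2+\eps)(t-l)})$ quartic tuples by Hua's lemma $\int_0^1 |g|^4\D\alpha \ll P^{2+\eps}$. Inspection shows that the residual sub-system (on rows $t-l+1,\ldots,R$ and with the $t-l$ vanishing columns of $V$ deleted) is an auxiliary matrix of type $(n-1,r,0)_{r,l}$ whose $V$-block consists precisely of $V_2,\ldots,V_n$; its contribution is therefore at most $I_{n-1,r}^0(P)$. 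Combining gives the first term $P^{3(t-l)+\eps}I_{n-1,r}^0(P)$.

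In the complementary case, at least one $c_{R+k}$ is non-zero, and by total non-singularity of the top $(t-l) \times (t-l)$ submatrix of $V_1$ the map $(c_{R+k})_k \mapsto (c_i)_i$ determined by the pure equations is a linear bijection, so at least one $c_{i_0}$ with $i_0 \in \{1,\ldots,t-l\}$ is itself non-zero. A union bound over such $i_0$ reduces matters to bounding, for each fixed $i_0$, the subcount with $c_{i_0} \ne 0$: fixing the remaining variables, the $i_0$-th equation specifies $c_{i_0}$ as an explicit non-zero integer, and the cubic divisor bound $\#\{(x,y) \in [-P,P]^2 : x^3 - y^3 = c\} \ll P^\eps$ for $c \ne 0$ yields $O(P^\eps)$ choices for $(x_{i_0},y_{i_0})$. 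The main technical hurdle---and the step I expect to require the most care---is verifying via Lemma \ref{hns-prop} that the matrix $D^{(i_0)}$ obtained by deleting row and column $i_0$ from $D$ is itself an auxiliary matrix of type $(n,t-1,1)_{r,l}$, with first block $V_1^{(i_0)}$ of shape $(t-1) \times (t-l)$ still totally non-singular and linked by $l$ rows to the unchanged $V_2$. Granting this structural check, the subcount is bounded by $I_{n,t-1}^1(P)$, and the union bound produces the second term $P^\eps I_{n,t-1}^1(P)$ as claimed.
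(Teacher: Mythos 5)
Your proposal is correct and follows essentially the same route as the paper's proof: orthogonality interpretation, a dichotomy according to whether the $c$'s attached to the pure rows of $V_1$ vanish, Hua's lemma together with the residual $(n-1,r,0)_{r,l}$ auxiliary system in the diagonal case, and a divisor estimate together with the residual $(n,t-1,1)_{r,l}$ auxiliary system in the off-diagonal case. The only cosmetic differences are that you condition on $(c_{R+1},\ldots,c_{R+t-l})$ and transfer to $(c_1,\ldots,c_{t-l})$ via the non-singularity of the top $(t-l)\times(t-l)$ block of $V_1$, whereas the paper conditions directly on $x_{j,1}=x_{j,2}$ and uses a diagonalisation; and the paper phrases the non-vanishing case in Fourier language via the weighted integral $T(h)$ while you argue by direct counting---both routes pass through the same divisor bound and arrive at the same conclusion.
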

\begin{proof}
    Let $D$ be auxiliary of type $(n,t,0)_{r,l}$ and observe that by orthogonality, $I(P,D)$ counts the number of solutions to the system
    \begin{align}\label{ind1-sys}
        d_{i,i}(x_{i,1}^3 - x_{i,2}^3) + \sum_{j=R+1}^S d_{i,j}(x_{j,1}^3+x_{j,2}^3-x_{j,3}^3-x_{j,4}^3) = 0 \qquad (1 \le i \le R)
    \end{align}
    with $-P \le x_{j,k} \le P$ for all $j$ and $k$. Denote by $T_0$ the number of solutions counted by \eqref{ind1-sys} having $x_{j,1}=x_{j,2}$ for all $1 \le j \le t-l$, and write $T_j$ for the number of solutions having $x_{j,1} \neq x_{j,2}$. Then we have
    \begin{align*}
        I(P, D) \le T_0 + T_1 + \ldots + T_{t-l},
    \end{align*}
    and one sees easily that $T_0 \ll P^{t-l}H_0$, where $H_0$ denotes the number of solutions to the system
    \begin{align*}
        \sum_{j=R+1}^S d_{i,j}(x_{j,1}^3+x_{j,2}^3-x_{j,3}^3-x_{j,4}^3) &= 0 \qquad (1 \le i \le t-l), \\
        d_{i,i}(x_{i,1}^3 - x_{i,2}^3) + \sum_{j=R+1}^S d_{i,j}(x_{j,1}^3+x_{j,2}^3-x_{j,3}^3-x_{j,4}^3) &= 0 \qquad (t-l+1 \le i \le R).
    \end{align*}
    The first $t-l$ rows of the remaining matrix have entries only in the columns $R+1, \ldots, R+t-l$. We may apply elementary row operations to diagonalise the submatrix $[1, t-l] \times [R+1, R+t-l]$, and use this diagonal matrix in order to eliminate all entries in the submatrix $ [t-l+1, t] \times [R+1, R+t-l]$. This operation does not affect the matrix $D_1 = [t-l+1, R] \times ([t-l+1, R]\cup [R+t-l+1, S])$. This means that $D_1$ is auxiliary of type $(n-1,r,0)_{r,l}$, and thus the number of solutions of the subsystem associated to the matrix $D_1$ is bounded above by $I_{n-1,r}^0(P)$. It thus remains to bound the number $N_1$ of solutions to the system
    \begin{align*}
        d_{i,R+i}(x_{i,1}^3+x_{i,2}^3-x_{i,3}^3-x_{i,4}^3) &= 0 \qquad (1 \le i \le t-l),
    \end{align*}
    and by Hua's Lemma~\cite[Lemma~2.5]{V:HL} we obtain the bound
    \begin{align*}
        N_1 \ll \prod_{i=1}^{t-l} \int_0^1|g(d_{i, R+i}\eta)|^4 \D \eta \ll P^{2(t-l)+\eps}.
    \end{align*}
    Thus we conclude that $H_0 \ll P^{2(t-l)+\eps} I_{n-1,r}^{0}(P)$, and the corresponding bound for $T_0$ is acceptable.
    \begin{figure}
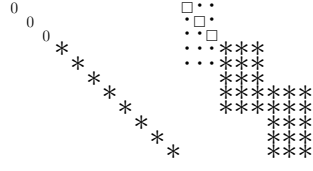

        \begin{align*}
    \arraycolsep=0pt\def\arraystretch{0}
        \begin{array}{*{20}c}
                \sm 0&     &     & & & & & & & & & \ssq&\cdot&\cdot& & & & & & \\
                     &\sm 0&     & & & & & & & & &\cdot& \ssq&\cdot& & & & & & \\
                     &     &\sm 0& & & & & & & & &\cdot&\cdot& \ssq& & & & & & \\
                     &     &     &*& & & & & & & &\cdot&\cdot&\cdot&*&*&*& & & \\
                     &     &     & &*& & & & & & &\cdot&\cdot&\cdot&*&*&*& & & \\
                     &     &     & & &*& & & & & &     &     &     &*&*&*& & & \\
                     &     &     & & & &*& & & & &     &     &     &*&*&*&*&*&*\\
                     &     &     & & & & &*& & & &     &     &     &*&*&*&*&*&*\\
                     &     &     & & & & & &*& & &     &     &     & & & &*&*&*\\
                     &     &     & & & & & & &*& &     &     &     & & & &*&*&*\\
                     &     &     & & & & & & & &*&     &     &     & & & &*&*&*\\
              \hphantom{*} &\hphantom{*} &\hphantom{*} &
     \end{array}
    \end{align*}
    \caption{\footnotesize{Schematic illustration of the argument of bounding $H_0$ for an auxiliary matrix of type $(3,5,0)_{5,2}$. The entries where $x_{i,1}= x_{i,2}$ have been marked with zeros. After diagonalising the corresponding entries on the right hand side, the residual matrix $D_1$, marked by asterisks, is auxiliary of type $(2,5,0)_{5,2}$. }}\label{f-ind1}
    \end{figure}

    We now turn to the contribution of $T_j$ for $1 \le j \le t-l$. By symmetry, it is enough to consider the case $j=1$. Denote by $c_h$ the number of integral solutions $-P \le x,y \le P$ to the equation $d_{1,1}(x^3-y^3)=h$ and write
    \begin{align}\label{T(h)}
        T(h)= \oint \prod_{i=2}^R|g(\theta_i)|^2 \prod_{i=R+1}^S |g(\theta_i)|^4 e(\eta_1 h)\D {\bm \eta},
    \end{align}
    then we find that
    \begin{align*}
        T_1 = \sum_{h \in \Z \setminus \{0\}} c_h T(h).
    \end{align*}
    Observe that $T(h)=0$ except when $|h| \ll P^4$. Furthermore, it follows from an elementary divisor estimate that $c_h \ll h^\eps$ for all $h\ne 0$. We therefore deduce that
    \begin{align*}
        T_1 \ll P^\eps \sum_{h \in \Z} T(h),
    \end{align*}
    and it follows from considering the underlying equations that $\sum_{h \in \Z}T(h)$ counts the number of the solutions of the system associated to the matrix $[2,R] \times [2,S]$. When $ t \ge l+1$, this matrix is auxiliary of type $(n, t-1, 1)_{r,l}$. This concludes the proof of the lemma.
\end{proof}

\begin{lem}\label{ind3}
    We have
    \begin{align*}
         I_{n,t}^{\omega}(P) \ll P^{3+\eps}( I_{n,t}^{\omega-1}(P) +I_{n,t-1}^{\omega}(P))
    \end{align*}
    for all $n \ge 1$, $t \ge l+1$ and $1 \le \omega \le l$.
\end{lem}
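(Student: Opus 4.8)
The plan is to argue by orthogonality, so that $I(P,D)$ for $D$ auxiliary of type $(n,t,\omega)_{r,l}$ counts the integral solutions of the diagonal system attached to $D$ as in \eqref{ind1-sys}, and then to dispose of the first column of the block $V_1$ together with the first diagonal variable by a dichotomy. Write $z_{R+1}=x_{R+1,1}^3+x_{R+1,2}^3-x_{R+1,3}^3-x_{R+1,4}^3$ for the quaternary form belonging to the first column of $V_1$, and $w_1=x_{1,1}^3-x_{1,2}^3$ for the first diagonal variable. The two summands on the right correspond to the two legitimate ways of shrinking the data: deleting the column $R+1$ turns $D$ into an auxiliary matrix of type $(n,t,\omega-1)_{r,l}$ (this uses that a totally non-singular matrix stays totally non-singular under deletion of a column, hence Lemma~\ref{hns-prop}), whereas deleting the first row, the first diagonal column and the column $R+1$ produces one of type $(n,t-1,\omega)_{r,l}$; the point is that the corresponding eliminations are either free (via a divisor estimate on $w_1$, costing only $P^\eps$) or cost a controlled power of $P$ (via the number of trivial representations, costing $P^{2+\eps}$, or via the number of diagonal choices of $x_{1,1}$, costing $P$).

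Concretely I would first split according to whether $x_{1,1}=x_{1,2}$. If so, there are $\ll P$ such choices, and since $V_1$ has no vanishing entry the first equation can be used to solve for $z_{R+1}$ in terms of the other $V_1$-variables; there are $\ll P^{2+\eps}$ admissible blocks $(x_{R+1,1},\ldots,x_{R+1,4})$ by Hua's lemma. Substituting this value into the remaining rows amounts to forming the Schur complement of the $(1,R+1)$ entry of $V_1$ inside rows $2,\ldots,t$; by the determinant identity every minor of this Schur complement is, up to the nonzero factor $d_{1,R+1}$, a minor of $V_1$, so total non-singularity is preserved, and the residual matrix is auxiliary of type $(n,t-1,\omega)_{r,l}$. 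This branch therefore contributes $\ll P\cdot P^{2+\eps}\cdot I_{n,t-1}^{\omega}(P)$. If instead $x_{1,1}\ne x_{1,2}$, I would sub-divide according to whether $z_{R+1}=0$: in the vanishing case the $\ll P^{2+\eps}$ trivial blocks may be summed out and the column $R+1$ deleted, leaving an auxiliary matrix of type $(n,t,\omega-1)_{r,l}$, which yields $\ll P^{2+\eps}I_{n,t}^{\omega-1}(P)$. Both of these are dominated by $P^{3+\eps}$ times the relevant $I$-value.

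The remaining case, $x_{1,1}\ne x_{1,2}$ and $z_{R+1}\ne 0$, is the genuine obstacle. Here one is forced to use the $h$-trick of Lemma~\ref{ind1}: writing $d_{1,1}(x_{1,1}^3-x_{1,2}^3)=h$, one has $\ll h^\eps\ll P^\eps$ representations (and $h\ll P^3$), and summing the resulting integrals over $h$ via orthogonality collapses the first row and column without loss of a power of $P$, but passes to an auxiliary matrix of type $(n,t-1,\omega+1)_{r,l}$ in which the restriction $z_{R+1}\ne 0$ persists. Iterating the whole dichotomy on this smaller matrix, each pass through the non-diagonal branch lowers $t$ by one and raises $\omega$ by one at cost only $P^\eps$, whereas the diagonal branches terminate the recursion at cost $P^{3+\eps}$ times an $I$-value whose parameter sum $t+\omega$ has dropped by exactly one. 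Since $\omega$ cannot be raised past $l$, this process stops after finitely many steps, and the terminal $I$-values, having the same order of magnitude (in the sense of Proposition~\ref{silly}) as $I_{n,t-1}^{\omega}(P)$ or $I_{n,t}^{\omega-1}(P)$, are absorbed into those two terms by a direct embedding of auxiliary matrices. The delicate point — and the main place where care is needed — is precisely this bookkeeping: making sure that only one step in each chain pays the factor $P^{3}$, that the totally non-singular structure survives every Schur elimination, and that when $\omega$ has been pushed up to $l$ (so that $V_1$ is square) the recursion can still be closed off rather than spilling outside the admissible range $0\le\omega\le l$.
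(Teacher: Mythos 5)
There is a genuine gap. Your first two branches are sound and in fact mirror the two arcs of the paper's argument: the case $x_{1,1}=x_{1,2}$ costs $P\cdot P^{2+\eps}$ and passes (after the Schur-type elimination, whose preservation of total non-singularity is exactly the Lemma~\ref{hns-prop} argument in the paper) to type $(n,t-1,\omega)_{r,l}$, while the case $z_{R+1}=0$ costs $P^{2+\eps}$ and passes to type $(n,t,\omega-1)_{r,l}$. But the residual case $x_{1,1}\ne x_{1,2}$, $z_{R+1}\ne 0$ — which you yourself identify as the genuine obstacle — is not actually handled. Applying the $h$-trick of Lemma~\ref{ind1} there produces a matrix of type $(n,t-1,\omega+1)_{r,l}$, and iterating drives the parameters to $(n,t-k,\omega+k)$; once $\omega+k$ exceeds $l$ this is no longer an admissible auxiliary type, and nothing in the machinery (in particular not Lemma~\ref{ind4}, which only treats $t=l$, $\omega\le l$) bounds such objects, so the recursion does not "close off" as asserted. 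Moreover the terminal terms of your recursion are of the shape $P^{3+\eps}I_{n,t-k-1}^{\omega+k}(P)$ and $P^{2+\eps}I_{n,t-k}^{\omega+k-1}(P)$ with $k\ge 1$, which are not the two terms in the statement; the proposed "absorption" is unjustified, since there is no embedding of auxiliary matrices giving $I_{n,t-k-1}^{\omega+k}\ll I_{n,t-1}^{\omega}$, and appealing to Proposition~\ref{silly} (which says that $I$-values with equal $t+\omega$ obey the same bound) is circular: the present lemma is an ingredient in its inductive proof.

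The paper avoids this case analysis altogether by an analytic dissection in the single variable $\eta_1$: after row operations arranging $\theta_{R+1}=d_{1,R+1}\eta_1$, it splits $\eta_1$ into the minor arcs $\F m$, where the Weyl bound \eqref{m-bd} gives $|g(\theta_{R+1})|^4\ll P^{3+\eps}$ and the column $R+1$ is deleted (type $(n,t,\omega-1)_{r,l}$), and the major arcs $\F M$, where $|g(\theta_1)|^2$ is estimated trivially by $P^2$ (irrespective of whether $x_{1,1}=x_{1,2}$), the decoupling $|\F G(\eta_1)|\le\F G(0)$ is applied, and the mean value \eqref{M-bd} gives $\int_{\F M}|g(d_{1,R+1}\eta_1)|^4\D\eta_1\ll P^{1+\eps}$, yielding $P^{3+\eps}I_{n,t-1}^{\omega}(P)$. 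The key inputs you are missing are precisely \eqref{M-bd} (a genuinely stronger bound than Hua's $P^{2+\eps}$ on the full unit interval, which is what lets the diagonal and non-diagonal contributions be treated together) and the decoupling via the triangle inequality; a purely combinatorial divisor/Hua count cannot dispose of the branch $x_{1,1}\ne x_{1,2}$, $z_{R+1}\ne 0$ without raising $\omega$ beyond $l$.
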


\begin{pf}
    Let $D$ be an auxiliary matrix of type $(n,t,\omega)_{r,l}$. Then $I(P,D)$ counts the number of solutions to a system of the shape \eqref{ind1-sys}. By subtracting multiples of the first equation from the lower ones, we can eliminate the entries $d_{i, R+1}$ for $2 \le  i \le t$, so that in \eqref{chvar} we get $\theta_{R+1} = d_{1, R+1} \eta_1$.

    For a measurable subset $\F B \subseteq \T^R$ write
    \begin{align}\label{N(B)}
        I(P,D; \F B) = \int_{\F B} \prod_{i=1}^R |g(\theta_i)|^2 \prod_{i=R+1}^S |g(\theta_i)|^4 \D \bm \eta.
    \end{align}
    Denote by $\F M$ the union of intervals
    \begin{align}\label{M-def}
        \F M(q,a) = \{\eta \in \T: |q \eta-a| \le P^{-9/4}\}
    \end{align}
    with $1 \le a \le q \le P^{3/4}$, and set $\F m = \T \setminus \F M$. Then for every fixed non-zero integer $c$ we have the bounds
    \begin{align}\label{M-bd}
        \int_{\F M}|g(c \eta)|^4 \D \eta \ll P^{1+\eps}
    \end{align}
    and
    \begin{align}\label{m-bd}
        \sup_{\eta \in \F m}|g(c \eta)|  \ll P^{3/4+\eps},
    \end{align}
    stemming from \cite[Lemma~3.4]{BKW:01cubes} and \cite[Lemma~1]{Va:86a}, respectively. Write further $\F n$ for the set of those $\bm \eta \in \T^R$ having $\eta_{1} \in \F m$, and $\F N$ for the complementary set having $\eta_{1} \in \F M $.  In this notation we clearly have
    \begin{align*}
        I(P,D) \ll I(P,D; \F n)+ I(P,D; \F N),
    \end{align*}
    and it follows immediately from \eqref{m-bd} that we have the bound
    \begin{align*}
        I(P,D; \F n) \ll P^{3+\eps} \oint \prod_{i=1}^R |g(\theta_i)|^2 \prod_{i=R+2}^S |g(\theta_i)|^4 \D {\bm \eta}.
    \end{align*}
    Denote by $D_1$ the matrix $[1,R] \times ([1, R] \cup [R+2, S])$ occurring in this mean value. By reversing our inital elementary row operations we see that $D_1$ is row-equivalent to an auxiliary matrix of type $(n,t,\omega-1)_{r,l}$. It thus follows from considering the underlying equations that $I(P, D_1) \ll I_{n,t}^{\omega-1}(P)$, whence we obtain the bound $I(P,D; \F n) \ll P^{3+\eps}I_{n,t}^{\omega-1}(P) $.

    It remains to estimate the contribution from $\F N$. Observe that the rows $2, \ldots, R$ are populated only in the columns $1, \ldots, R$ and $R+2, \ldots, S$. Write $\bm \eta' = (\eta_{2}, \ldots, \eta_R)$ and
    \begin{align*}
        \F G( \eta_1 ) =\oint \prod_{i=2}^R |g(\theta_i)|^2 \prod_{i=R+2}^S |g(\theta_i)|^4 \D \bm \eta',
    \end{align*}
    then estimating the exponential sum $g(\theta_1)$ trivially yields
    \begin{align}\label{ind3-IG}
        I(P,D; \F N) \ll P^2 \int_{\F M}  |g(d_{1, R+1}\eta_{1})|^4 \F G(\eta_1 )  \D \eta_1.
    \end{align}
    The function $\F G(\eta_1)$ counts the number of solutions to the system of equations given by the matrix $D_2 = [2,R] \times ([2,R] \cup [R+2,S])$ equipped with a unimodular weight depending on $\eta_1 $. It therefore follows by the triangle inequality that $|\F G(\eta_1 )| \le \F G( 0)$. Substituting this in \eqref{ind3-IG} produces the estimate
    \begin{align*}
        I(P,D; \F N) \ll  P^{2} \F G( 0) \int_{\F M} |g(d_{1, R+1}\eta_{1})|^4  \D \eta_1 \ll P^{3+\eps} \F G( 0),
    \end{align*}
    where in the last step we applied \eqref{M-bd}.

    It remains to show that the matrix $D_2$ is auxiliary of type $(n,t-1,\omega)_{r,l}$. In order to see this, we only need to check that the submatrix $M=[2, t] \times [R+2, R+t-l+\omega]$ of $D_2$ is totally non-singular. This matrix has been obtained from the totally non-singular submatrix $L= [1, t] \times [R+1, R+t-l+\omega]$. Since $L$ is totally non-singular, the matrix $L^*=(\Id_t , L)$ is highly non-singular by Lemma~\ref{hns-prop} (ii), and this property is not affected by elementary row operations. If we thus use the top left element of $L$ to eliminate all other entries of the first column of $L$, the correspondingly transformed matrix $L^*$ is still highly non-singular. By Lemma~\ref{hns-prop} (i) we may now eliminate the first and $(R+1)$-st columns and the first row of this transformed matrix without losing high nonsingularity. The resulting matrix is of the shape $(\Id_{t-1}, M)$, so $M$ is totally non-singular by Lemma~\ref{hns-prop} (ii). It follows that the matrix $D_2$ is indeed auxiliary of type $(n,t-1,\omega)_{r,l}$. This allows us to bound $\F G( 0) \ll I_{n,t-1}^{\omega}(P)$, which completes the proof of the lemma.
\end{pf}

\begin{lem}\label{ind4}
    For all $1 \le \omega \le l$ and $n\ge 2$ we have
    \begin{align*}
         I_{n,l}^{\omega}(P) \ll P^{3+\eps} I_{n,l}^{\omega-1}(P) + \sum_{m=0}^{l-\omega}  P^{3 \omega + 3m+\eps} I_{n-1, r-l}^{l-m}(P).
    \end{align*}
\end{lem}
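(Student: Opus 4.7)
\begin{pf}
The plan is to extend the argument of Lemma~\ref{ind3}, iterating the minor/major arc dichotomy and taking care of the fact that the natural one-step reduction breaks down when $t = l$.

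Following the template of Lemma~\ref{ind3}, I would first apply elementary row operations to eliminate the entries $d_{i, R+1}$ for $2 \le i \le l$, so that $\theta_{R+1} = d_{1, R+1} \eta_1$, and then split $\T^R = \F n \cup \F N$ according to whether $\eta_1 \in \F m$ or $\eta_1 \in \F M$. On $\F n$, Weyl's estimate \eqref{m-bd} gives $|g(\theta_{R+1})|^4 \ll P^{3+\eps}$ and the row-equivalence argument at the end of the proof of Lemma~\ref{ind3} shows that the residual mean value corresponds to a matrix of auxiliary type $(n,l,\omega-1)_{r,l}$; this contributes $P^{3+\eps}I_{n,l}^{\omega-1}(P)$, which is the first term of the bound. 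On $\F N$, the $L^4$ bound \eqref{M-bd} together with the trivial estimate $|g(\theta_1)|^2 \ll P^2$ and the triangle inequality $|\F G(\eta_1)| \le \F G(0)$ yield a contribution $\ll P^{3+\eps}I(P,D_2)$, with $D_2 = [2,R] \times ([2,R] \cup [R+2,S])$.

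The decisive point is that when $t = l$, the matrix $D_2$ fails to be auxiliary of type $(n,l-1,\omega)$, since the would-be first block has only $l-1$ rows, in violation of the constraint $t \ge l$. To resolve this I would iterate the above decomposition: for each $m \in \{0,1,\ldots,l-\omega\}$, I apply the same row-operation-and-dissection procedure successively to the first remaining $V$-column at each stage, for a total of $\omega + m$ steps. Each step accrues a factor $P^{3+\eps}$, and after $\omega + m$ iterations the first $\omega$ columns of $V_1$ and the first $m$ columns of $V_2$ have all been removed, along with the first $\omega + m$ rows.

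The core task is then to identify the residual mean value with $I_{n-1,r-l}^{l-m}(P)$. To this end, I would use the totally non-singular submatrix $V_2[l+1{:}r,\, m+1{:}r-l]$ (which is a submatrix of the totally non-singular $V_2$, and has size $(r-l)\times(r-l-m)$ with $r-l \ge l$ by hypothesis) to carry out further row operations that eliminate the residual $V_2$-entries in the ``orphan'' rows $\omega+m+1,\ldots,l$. Arguments analogous to those at the end of the proof of Lemma~\ref{ind3}, invoking Lemma~\ref{hns-prop} and the high non-singularity of $(\Id_{r-l} \mid V_2[l+1{:}r, m+1{:}r-l])$, then show that the resulting matrix is auxiliary of type $(n-1, r-l, l-m)_{r,l}$, with the correct linking of $l$ overlapping rows between the new first block and $V_3$. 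Summing the contributions from $m=0,1,\ldots,l-\omega$ produces the claimed sum.

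The principal obstacle is the last identification step: one must verify that, despite the accumulated modifications introduced by the $\omega+m$ successive row operations, the reduced matrix genuinely satisfies the definition of an auxiliary matrix of type $(n-1,r-l,l-m)_{r,l}$, and in particular that its new first block is totally non-singular. This is essentially a bookkeeping argument on minors, handled via Lemma~\ref{hns-prop}, but the case analysis is delicate and is the reason the lemma is stated separately from Lemma~\ref{ind3}.
\end{pf}
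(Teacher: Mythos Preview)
Your proposal has a genuine gap. After $\omega+m$ iterations of the major-arc branch you have removed rows $1,\ldots,\omega+m$ together with their diagonal columns and the first $\omega+m$ $V$-columns, leaving a matrix with $R-\omega-m$ rows; but an auxiliary matrix of type $(n-1,r-l,l-m)_{r,l}$ has only $R-l$ rows, so for $m<l-\omega$ the ``orphan'' rows $\omega+m+1,\ldots,l$ are still present. Your suggested fix --- clearing their $V_2$-entries by adding multiples of rows from $[l+1,r]$ --- does not remove these rows; it merely replaces their $V_2$-entries by new entries in the diagonal columns $l+1,\ldots,r$ (and possibly in the $V_3$-columns), so the orphan equations remain coupled to the rest of the system and the residual is not auxiliary of the claimed type. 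Nothing in your outline actually discards these rows, and there is likewise no clear account of where the summation index $m$ comes from or how the minor-arc branches at iteration steps $2,\ldots,\omega+m$ are to be bounded.

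The paper's argument is structurally different and supplies the missing device. After disposing of all $\omega$ columns of $V_1$ simultaneously on the major arcs (producing the single prefactor $P^{3\omega+\eps}$), the index $m$ arises not from further arc dissections but from the diagonal/off-diagonal dichotomy already used in Lemma~\ref{ind1}: one sorts solutions by the number $m$ of indices $\omega+1\le i\le l$ with $x_{i,1}=x_{i,2}$. The $m$ diagonal rows contribute $P^m$ trivially and their matching $V_2$-columns are dispatched by Hua's lemma (another $P^{2m}$), while the $l-\omega-m$ off-diagonal rows are removed at cost only $P^\eps$ via the divisor estimate for $x^3-y^3=h\ne 0$. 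Crucially, this discards \emph{all} of rows $1,\ldots,l$, so the surviving portion of $V_2$ lies in rows $l+1,\ldots,r$, which were untouched by any row operation; its total non-singularity is then immediate from that of $V_2$. It is this arithmetic removal of the orphan rows --- not further row reduction --- that makes the identification with $I_{n-1,r-l}^{l-m}(P)$ go through.
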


\begin{pf}
    Suppose that $D$ is an auxiliary matrix of type $(n,l,\omega)_{r,l}$. As in the previous lemmas, understanding the mean value $I(P,D)$ is tantamount to counting the number of solutions to a system of equations of the shape \eqref{ind1-sys}.
    Since the matrix $[1, \omega] \times [R+1, R+\omega]$ is non-singular, we can take linear combinations of the first $l$ rows in order to diagonalise this matrix and eliminate all entries of $[\omega+1,R] \times [R+1, R+\omega]$. This operation leaves the diagonal matrix $[\omega+1,R] \times [\omega+1, R]$ intact and simultaneously allows us to write $\theta_{R+j} = d_{j, R+j} \eta_j$ for $1 \le j \le \omega$ in \eqref{chvar}. Recall the definition of the major and minor arcs from \eqref{M-def}. For $1 \le j \le \omega$ write $\cal B_j$ for the set of $\bm \eta \in \T^R$ with $\eta_{j} \in \F m $, and let $\cal B_0$ denote the complementary set where $\eta_{j} \in \F M$ for $1 \le j \le \omega$. This implies that we have
    \begin{align*}
        I(P,D) \ll I(P,D;\cal B_0) +I(P,D;\cal B_1)  + \ldots + I(P,D;\cal B_\omega),
    \end{align*}
    where we used the notation introduced in \eqref{N(B)}. Just like in the previous lemma we derive from \eqref{m-bd} the bound
    \begin{align*}
        I(P,D;\cal B_j) \ll P^{3+\eps} I_{n,l}^{\omega-1}(P)
    \end{align*}
    for $1 \le j \le \omega$.
    It thus suffices to study the contribution from $\cal B_0$. Write $\bm \eta' = (\eta_1, \ldots, \eta_{\omega})$ and $\bm \eta^*=(\eta_{\omega+1}, \ldots, \eta_R)$, and let
    \begin{align*}
        \F G(\bm \eta') = \oint \prod_{i=\omega+1}^R |g(\theta_i)|^2 \prod_{i=R+\omega+1}^S |g(\theta_i)|^4 \D \bm \eta^*.
    \end{align*}
    Then after estimating the first $\omega$ exponential sums trivially, we arrive at the bound
    \begin{align*}
        I(P,D;\cal B_0) \ll  P^{2\omega} \int_{\F M^\omega} \prod_{i=R+1}^{R+\omega}|g(d_{i, R+i} \eta_i)|^4 \F G(\bm \eta') \D \bm \eta'.
    \end{align*}
    The function $\F G(\bm \eta')$ counts the number of solutions to the system
    \begin{align}\label{ind4-sys}
        d_{i,i}(x_{i,1}^3 - x_{i,2}^3) + \sum_{j=R+\omega+1}^S d_{i,j}(x_{j,1}^3+x_{j,2}^3-x_{j,3}^3-x_{j,4}^3) &= 0 \qquad (\omega+1 \le i \le R)
    \end{align}
    associated to the matrix $[\omega+1, R] \times ([\omega+1, R] \cup [R+\omega+1, S])$, where each solution carries a unimodular weight depending on $\bm \eta'$, and it follows from the triangle inequality that $|\F G(\bm \eta')| \le \F G(\bm 0)$. Hence by applying \eqref{M-bd}, we find that
    \begin{align}\label{ind4-IG}
        I(P,D;\cal B_0) \ll  P^{2\omega} \F G(\bm 0) \int_{\F M^\omega} \prod_{i=1}^{\omega}|g(d_{i, R+i}\eta_i)|^4  \D \bm \eta' \ll P^{3\omega+\eps}\F G(\bm 0).
    \end{align}
    Our task is therefore to bound the exponential sum $\F G(\bm 0)$. However, since we have performed elementary row operations on the first $l$ rows, the matrix associated to this system is not necessarily auxiliary, as is illustrated in Figure \ref{f-ind4a}. This forces us to be quite careful in our operations. In particular, it does not allow us to estimate $\F G(\bm 0)$ by $I_{n-1, r-\omega}^{\omega}(P)$, as might be desirable.

    \begin{figure}[H]
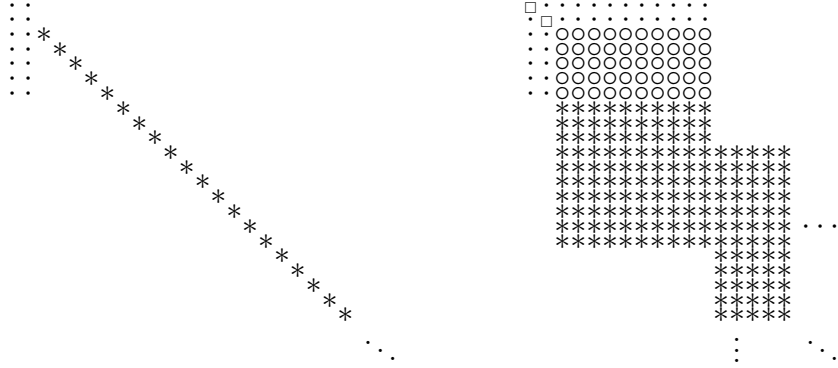

        \begin{align*}%
            \arraycolsep=0pt\def\arraystretch{0}
            \begin{array}{*{42}c}
                \cdot&\cdot& & & & & & & & & & & & & & & & & & & & & & \ssq&\cdot&\cdot&\cdot&\cdot&\cdot&\cdot&\cdot&\cdot&\cdot&\cdot&\cdot& & & & & & &\\
                \cdot&\cdot& & & & & & & & & & & & & & & & & & & & & &\cdot& \ssq&\cdot&\cdot&\cdot&\cdot&\cdot&\cdot&\cdot&\cdot&\cdot&\cdot& & & & & & &\\
                \cdot&\cdot&*& & & & & & & & & & & & & & & & & & & & &\cdot&\cdot&\circ&\circ&\circ&\circ&\circ&\circ&\circ&\circ&\circ&\circ& & & & & & &\\
                \cdot&\cdot& &*& & & & & & & & & & & & & & & & & & & &\cdot&\cdot&\circ&\circ&\circ&\circ&\circ&\circ&\circ&\circ&\circ&\circ& & & & & & &\\
                \cdot&\cdot& & &*& & & & & & & & & & & & & & & & & & &\cdot&\cdot&\circ&\circ&\circ&\circ&\circ&\circ&\circ&\circ&\circ&\circ& & & & & & &\\
                \cdot&\cdot& & & &*& & & & & & & & & & & & & & & & & &\cdot&\cdot&\circ&\circ&\circ&\circ&\circ&\circ&\circ&\circ&\circ&\circ& & & & & & &\\
                \cdot&\cdot& & & & &*& & & & & & & & & & & & & & & & &\cdot&\cdot&\circ&\circ&\circ&\circ&\circ&\circ&\circ&\circ&\circ&\circ& & & & & & &\\
                     &     & & & & & &*& & & & & & & & & & & & & & & &     &     &  *  &  *  &  *  &  *  &  *  &  *  &  *  &  *  &  *  &  *  & & & & & & &\\
                     &     & & & & & & &*& & & & & & & & & & & & & & &     &     &  *  &  *  &  *  &  *  &  *  &  *  &  *  &  *  &  *  &  *  & & & & & & &\\
                     &     & & & & & & & &*& & & & & & & & & & & & & &     &     &  *  &  *  &  *  &  *  &  *  &  *  &  *  &  *  &  *  &  *  & & & & & & &\\
                     &     & & & & & & & & &*& & & & & & & & & & & & &     &     &  *  &  *  &  *  &  *  &  *  &  *  &  *  &  *  &  *  &  *  &*&*&*&*&*& &\\
                     &     & & & & & & & & & &*& & & & & & & & & & & &     &     &  *  &  *  &  *  &  *  &  *  &  *  &  *  &  *  &  *  &  *  &*&*&*&*&*& &\\
                     &     & & & & & & & & & & &*& & & & & & & & & & &     &     &  *  &  *  &  *  &  *  &  *  &  *  &  *  &  *  &  *  &  *  &*&*&*&*&*& &\\
                     &     & & & & & & & & & & & &*& & & & & & & & & &     &     &  *  &  *  &  *  &  *  &  *  &  *  &  *  &  *  &  *  &  *  &*&*&*&*&*& &\\
                     &     & & & & & & & & & & & & &*& & & & & & & & &     &     &  *  &  *  &  *  &  *  &  *  &  *  &  *  &  *  &  *  &  *  &*&*&*&*&*& &\\
                     &     & & & & & & & & & & & & & &*& & & & & & & &     &     &  *  &  *  &  *  &  *  &  *  &  *  &  *  &  *  &  *  &  *  &*&*&*&*&*&\;\cdots  &\\
                     &     & & & & & & & & & & & & & & &*& & & & & & &     &     &  *  &  *  &  *  &  *  &  *  &  *  &  *  &  *  &  *  &  *  &*&*&*&*&*& &\\
                     &     & & & & & & & & & & & & & & & &*& & & & & &     &     &     &     &     &     &     &     &     &     &     &     &*&*&*&*&*& &\\
                     &     & & & & & & & & & & & & & & & & &*& & & & &     &     &     &     &     &     &     &     &     &     &     &     &*&*&*&*&*& &\\
                     &     & & & & & & & & & & & & & & & & & &*& & & &     &     &     &     &     &     &     &     &     &     &     &     &*&*&*&*&*& &\\
                     &     & & & & & & & & & & & & & & & & & & &*& & &     &     &     &     &     &     &     &     &     &     &     &     &*&*&*&*&*& &\\
                     &     & & & & & & & & & & & & & & & & & & & &*& &     &     &     &     &     &     &     &     &     &     &     &     &*&*&*&*&*& &\\
                 \hphantom{*} &\hphantom{*} &\hphantom{*} &\hphantom{*} &\hphantom{*} &\hphantom{*} &\hphantom{*} &\hphantom{*} &\hphantom{*} &\hphantom{*} &\hphantom{*} &\hphantom{*} &\hphantom{*} &\hphantom{*} &\hphantom{*} &\hphantom{*} &\hphantom{*} &\hphantom{*} &\hphantom{*} &\hphantom{*} &\hphantom{*} &\hphantom{*} &\;\ddots \qquad\qquad &\hphantom{*} &\hphantom{*} &\hphantom{*} &\hphantom{*} &\hphantom{*} &\hphantom{*} &\hphantom{*} &\hphantom{*} &\hphantom{*} &\hphantom{*} &\hphantom{*} &\hphantom{*} &\hphantom{*} & \vdots &\hphantom{*} &\hphantom{*} &\hphantom{*} &\;\ddots
            \end{array}
        \end{align*}
        \caption{\footnotesize{A schematic representation of the first two and a half blocs of $D$ after \eqref{ind4-IG}, with parameters $r=17$, $l=7$, $\omega=2$. The columns $R+1, \ldots, R+\omega$ have been diagonalised, and the first two columns and rows are deleted in the estimate \eqref{ind4-IG}. The matrix associated to $\F G(\bm 0)$ is marked with circles and asterisks, and all entries affected by the elementary row operations have been marked by a circle. }}\label{f-ind4a}
    \end{figure}

    Let $T_m$ denote the number of solutions to \eqref{ind4-sys} where $x_{i,1} = x_{i,2}$ for precisely $m$ indices $\omega+1 \le i \le l$, and we may assume without loss of generality that these are the indices $\omega+1 \le i \le \omega+m$. In this notation we have
    \begin{align}\label{ind4-GT}
        \F G(\bm 0) \ll \sum_{m=0}^{l-\omega} T_m.
    \end{align}
    For each $T_m$, there are $(2P+1)^m$ possible choices for the variables $x_{i,k}$ with $\omega+1 \le i \le \omega+m$ and $k \in \{1,2\}$. It follows that $T_m \ll P^m I(P,D_m)$, where $D_m$ denotes the matrix $[\omega+1, R] \times ([\omega+m +1, R] \cup [R+\omega+1, S])$. Since the submatrix $[1, l] \times [R+\omega+1, R+\omega +r-l]$ of $D$ had been of rank $l$, even after performing elementary row operations and deleting the first $\omega$ rows, its lower $l-\omega$ rows are still of full rank. This allows us to assume, without loss of generality, that the matrix $[\omega+1, \omega+m] \times [R+\omega+1, R+\omega +m]$ is non-singular and can thus be diagonalised.
    Write now $\bm \eta_1^* = (\eta_{\omega+1}, \ldots, \eta_{\omega+m})$ and $\bm \eta_2^* = (\eta_{\omega+m+1}, \ldots, \eta_{R})$, and let
    \begin{align*}
        \F H_m(\bm \eta^*_1) = \oint \prod_{i=\omega+m+1}^R |g(\theta_i)|^2 \prod_{i=R+\omega+m+1}^S |g(\theta_i)|^4 \D \bm \eta^*_2.
    \end{align*}
    In this notation we have
    \begin{align*}
        T_m & \ll  P^m\oint \sup_{ \bm \eta_2^*} \left(\prod_{i=R+\omega+1}^{R+ \omega+m} |g(\theta_i)|^4\right) \F H_m(\bm \eta^*_1)\D \bm \eta_1^*,
    \end{align*}
    where $\F H_m(\bm \eta^*_1)$ counts the solutions to the system associated to the matrix
    \begin{align*}
        D_m^*=[\omega+m+1, R] \times([\omega+m+1, R] \cup [R+\omega+m+1, S]),
    \end{align*}
    again weighted by a unimodular weight depending on $\bm \eta^*_1$. As before, the triangle inequality allows us to simplify $|\F H_m(\bm \eta^*_1)| \le \F H_m(\bm 0)$, and by a similar argument we see that the supremum over $\bm \eta_2^*$ is taken at $\bm \eta_2^*= \bm 0$. Since the matrix $[\omega+1, \omega+m] \times [R+\omega+1, R+\omega +m]$ had been diagonalised, we may conclude that
    \begin{align}\label{ind4-TH}
        T_m & \ll  P^m\F H_m(\bm 0) \oint \prod_{i=\omega+1}^{\omega+m} |g(d_{i, R+i}\eta_i)|^4 \D \bm \eta_1^*  \ll P^{3m+\eps}\F H_m(\bm 0),
    \end{align}
    where in the last step we applied Hua's Lemma~\cite[Lemma~2.5]{V:HL}.
    \begin{figure}
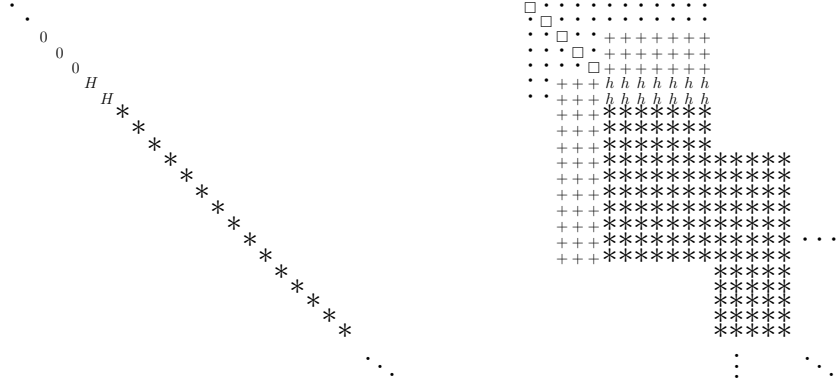

        \begin{align*}
            \arraycolsep=0pt\def\arraystretch{0}
            \begin{array}{*{43}c}
                \cdot&     &     &     &     &     &     & & & & & & & & & & & & & & & & & \ssq&\cdot&\cdot&\cdot&\cdot&\cdot&\cdot&\cdot&\cdot&\cdot&\cdot&\cdot& & & & & &\phantom{*} \\
                     &\cdot&     &     &     &     &     & & & & & & & & & & & & & & & & &\cdot& \ssq&\cdot&\cdot&\cdot&\cdot&\cdot&\cdot&\cdot&\cdot&\cdot&\cdot& & & & & &\phantom{*} \\
                     &     &\sm 0&     &     &     &     & & & & & & & & & & & & & & & & &\cdot&\cdot& \ssq&\cdot&\cdot&\sm +&\sm +&\sm +&\sm +&\sm +&\sm +&\sm +& & & & & &\phantom{*} \\
                     &     &     &\sm 0&     &     &     & & & & & & & & & & & & & & & & &\cdot&\cdot&\cdot& \ssq&\cdot&\sm +&\sm +&\sm +&\sm +&\sm +&\sm +&\sm +& & & & & & \\
                     &     &     &     &\sm 0&     &     & & & & & & & & & & & & & & & & &\cdot&\cdot&\cdot&\cdot& \ssq&\sm +&\sm +&\sm +&\sm +&\sm +&\sm +&\sm +& & & & & & \\
                     &     &     &     &     &\sm H&     & & & & & & & & & & & & & & & & &\cdot&\cdot&\sm +&\sm +&\sm +&\sm h&\sm h&\sm h&\sm h&\sm h&\sm h&\sm h& & & & & & \\
                     &     &     &     &     &     &\sm H& & & & & & & & & & & & & & & & &\cdot&\cdot&\sm +&\sm +&\sm +&\sm h&\sm h&\sm h&\sm h&\sm h&\sm h&\sm h& & & & & & \\
                     &     &     &     &     &     &     &*& & & & & & & & & & & & & & & &     &     &\sm +&\sm +&\sm +&  *  &  *  &  *  &  *  &  *  &  *  &  *  & & & & & & \\
                     &     &     &     &     &     &     & &*& & & & & & & & & & & & & & &     &     &\sm +&\sm +&\sm +&  *  &  *  &  *  &  *  &  *  &  *  &  *  & & & & & & \\
                     &     &     &     &     &     &     & & &*& & & & & & & & & & & & & &     &     &\sm +&\sm +&\sm +&  *  &  *  &  *  &  *  &  *  &  *  &  *  & & & & & & \\
                     &     &     &     &     &     &     & & & &*& & & & & & & & & & & & &     &     &\sm +&\sm +&\sm +&  *  &  *  &  *  &  *  &  *  &  *  &  *  &*&*&*&*&*& \\
                     &     &     &     &     &     &     & & & & &*& & & & & & & & & & & &     &     &\sm +&\sm +&\sm +&  *  &  *  &  *  &  *  &  *  &  *  &  *  &*&*&*&*&*& \\
                     &     &     &     &     &     &     & & & & & &*& & & & & & & & & & &     &     &\sm +&\sm +&\sm +&  *  &  *  &  *  &  *  &  *  &  *  &  *  &*&*&*&*&*& \\
                     &     &     &     &     &     &     & & & & & & &*& & & & & & & & & &     &     &\sm +&\sm +&\sm +&  *  &  *  &  *  &  *  &  *  &  *  &  *  &*&*&*&*&*& \\
                     &     &     &     &     &     &     & & & & & & & &*& & & & & & & & &     &     &\sm +&\sm +&\sm +&  *  &  *  &  *  &  *  &  *  &  *  &  *  &*&*&*&*&*& \\
                     &     &     &     &     &     &     & & & & & & & & &*& & & & & & & &     &     &\sm +&\sm +&\sm +&  *  &  *  &  *  &  *  &  *  &  *  &  *  &*&*&*&*&*& \;\cdots\\
                     &     &     &     &     &     &     & & & & & & & & & &*& & & & & & &     &     &\sm +&\sm +&\sm +&  *  &  *  &  *  &  *  &  *  &  *  &  *  &*&*&*&*&*& \\
                     &     &     &     &     &     &     & & & & & & & & & & &*& & & & & &     &     &     &     &     &     &     &     &     &     &     &     &*&*&*&*&*& \\
                     &     &     &     &     &     &     & & & & & & & & & & & &*& & & & &     &     &     &     &     &     &     &     &     &     &     &     &*&*&*&*&*& \\
                     &     &     &     &     &     &     & & & & & & & & & & & & &*& & & &     &     &     &     &     &     &     &     &     &     &     &     &*&*&*&*&*& \\
                     &     &     &     &     &     &     & & & & & & & & & & & & & &*& & &     &     &     &     &     &     &     &     &     &     &     &     &*&*&*&*&*& \\
                     &     &     &     &     &     &     & & & & & & & & & & & & & & &*& &     &     &     &     &     &     &     &     &     &     &     &     &*&*&*&*&*& \\
                 \hphantom{*} &\hphantom{*} &\hphantom{*} &\hphantom{*} &\hphantom{*} &\hphantom{*} &\hphantom{*} &\hphantom{*} &\hphantom{*} &\hphantom{*} &\hphantom{*} &\hphantom{*} &\hphantom{*} &\hphantom{*} &\hphantom{*} &\hphantom{*} &\hphantom{*} &\hphantom{*} &\hphantom{*} &\hphantom{*} &\hphantom{*} &\hphantom{*} &\;\ddots \qquad\qquad &\hphantom{*} &\hphantom{*} &\hphantom{*} &\hphantom{*} &\hphantom{*} &\hphantom{*} &\hphantom{*} &\hphantom{*} &\hphantom{*} &\hphantom{*} &\hphantom{*} &\hphantom{*} &\hphantom{*} & \vdots &\hphantom{*} &\hphantom{*} &\hphantom{*} &\;\ddots
            \end{array}
        \end{align*}
        \caption{\footnotesize{Schematic representation of the arguments around \eqref{ind4-TH}, \eqref{ind4-HR} and \eqref{ind4-HI} with $m=3$. The zeros denote the entries with $x_{i,1}=x_{i,2}$; the corresponding entries on the right hand side have been diagonalised in \eqref{ind4-TH}.  It then follows from \eqref{ind4-TH} that the entries marked by $+$ signs can be neglected. The matrix associated to $\F H_m(\bm 0)$ is the one marked by asterisks and the letters $h$ and $H$. The letter $H$ corresponds to the entries with $x_{i,1}- x_{i,2}=h_i \neq 0$; the corresponding rows, marked with $h$, are deleted by the argument around \eqref{ind4-HR}. The residual matrix $D_m^{\dagger}$, marked with asterisks, has not been affected by any of the row operations and is therefore auxiliary.}}\label{f-ind4b}
    \end{figure}

    By our definition, $T_m$ counts the number of solutions having $x_{i,1} = x_{i,2}$ precisely for the indices $\omega+1 \le i \le \omega+m$, so we may assume that $x_{i,1} \neq x_{i,2}$ for $\omega+m+1 \le i \le l$. Let
    \begin{align*}
        R_m(\B h) = \oint \prod_{i=l+1}^R |g(\theta_i)|^2  \prod_{i=R+\omega+m+1}^S |g(\theta_i)|^4 \prod_{i=\omega+m+1}^l e(h_i \eta_i) \D \bm \eta,
    \end{align*}
    then we have
    \begin{align}\label{ind4-HR}
        \F H_m(\bm 0) \ll \sum_{\substack{h_i \in \Z \setminus \{0\} \\ \omega + m +1 \le i \le l}} c_{\B h} R_m(\B h),
    \end{align}
    where $c_{\B h}$ denotes the number of solutions $-P \le x_{i,1}, x_{i,2} \le P$ to $d_{i,i}(x_{i,1}^3 - x_{i,2}^3) = h_i$ for $\omega + m +1 \le i \le l$.
    It follows from a divisor estimate that $c_{\B h} \ll |h_{\omega+m+1} \cdot \ldots \cdot h_l|^{\eps}$, and since $R_m(\B h) = 0$ for $\max |h_{i}| \gg P^{4}$, we obtain the bound
    \begin{align*}
        \F H_m(\bm 0) \ll P^{\eps} \sum_{\B h} R_m(\B h).
    \end{align*}
    On considering the underlying system of equations, we see that the sum $\sum_{\B h} R_m(\B h)$ counts the number of solutions to the system associated with the matrix
    \begin{align*}
        D_m^\dagger =  [l+1, R] \times([l+1, R] \cup [R+\omega+m+1, S]).
    \end{align*}
    This matrix is now auxiliary of type $(n-1, r-l, l-m)_{r,l}$. In order to see this, we need to show that the matrix $[l+1, r] \times [R+\omega+m+1, R+\omega+r-l]$ is totally non-singular. However, this follows directly upon observing that this submatrix has been obtained from the totally non-singular matrix $[1, r] \times [R+\omega+1, R+\omega+r-l]$ by deleting the first $l$ rows and the first $m$ columns. This has been illustrated in Figure \ref{f-ind4b}.
    It therefore follows that we may estimate
    \begin{align}\label{ind4-HI}
        \F H_m(\bm 0) \ll P^{\eps} I_{n-1, r-l}^{l-m}(P).
    \end{align}
    The statement of the Lemma~now follows upon combining the statements \eqref{ind4-IG}, \eqref{ind4-GT}, \eqref{ind4-TH} and \eqref{ind4-HI}.
\end{pf}

\begin{lem}\label{ind0}
    We have the bound $I_{1,l}^{0}(P) \ll P^l$. Furthermore, we have
    \begin{align*}
        I_{1,t}^{0}(P) &\ll P^{3t-2l+\eps} + P^\eps I_{1,t-1}^{1}(P) \quad \text{for $t \ge l+1$, and}\\
        I_{1,l}^{\omega}(P) &\ll P^{l+2\omega+\eps} + P^{3+\eps}I_{1,l}^{\omega-1}(P) \quad \text{for $\omega \ge 1$.}
    \end{align*}
\end{lem}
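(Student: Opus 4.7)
The plan is to prove these three statements as the $n=1$ base cases of Proposition~\ref{silly}, mirroring the arguments of Lemmas~\ref{ind1} and~\ref{ind4} (which required $n \ge 2$) but exploiting the fact that for $n=1$ the residual matrix after each block reduction degenerates to a purely diagonal submatrix, whose mean value is controlled by orthogonality alone. The first bound will then serve as the base case replacing the quantities $I_{n-1,\cdot}^{\cdot}(P)$ in the subsequent two arguments.

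For the first assertion, $D$ coincides with its $l \times l$ diagonal $U$-part (the $V$-part has no columns when $t=l$ and $\omega=0$), so \eqref{chvar} yields $\theta_i = d_{i,i}\eta_i$, and
\begin{align*}
I(P,D) = \prod_{i=1}^l \int_0^1 |g(d_{i,i}\eta)|^2 \D \eta \ll P^l
\end{align*}
follows by orthogonality.

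For the second assertion, I will run the decomposition $I(P,D) \le T_0+T_1+\cdots+T_{t-l}$ of Lemma~\ref{ind1}. In the treatment of $T_0$, the substitution $x_{j,1}=x_{j,2}$ for $1 \le j \le t-l$ costs $P^{t-l}$, after which the diagonalisation of $[1,t-l]\times[R+1,R+t-l]$ decouples the first $t-l$ quartic equations (bounded by $P^{2(t-l)+\eps}$ via Hua's Lemma) from the purely diagonal subsystem on $[t-l+1,R]\times[t-l+1,R]$ (bounded by $P^l$ via the first assertion), giving $T_0 \ll P^{3t-2l+\eps}$. The contribution from $T_j$ with $1 \le j \le t-l$ is handled verbatim as in Lemma~\ref{ind1}: a divisor bound reduces it to the mean value attached to $[2,R]\times[2,S]$, which is auxiliary of type $(1,t-1,1)_{r,l}$ and hence bounded by $I_{1,t-1}^1(P)$.

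For the third assertion, I will follow Lemma~\ref{ind4} by diagonalising $[1,\omega]\times[R+1,R+\omega]$ via row operations on the first $l$ rows, and then applying the Hardy--Littlewood dissection into sets $\cal B_0,\cal B_1,\ldots,\cal B_\omega$. On each $\cal B_j$ with $j \ge 1$, the minor arc bound \eqref{m-bd} extracts $P^{3+\eps}$ from $|g(d_{j,R+j}\eta_j)|^4$, and reversing the row operations shows that the surviving mean value equals $I(P,D')$ for the matrix $D'$ obtained by deleting column $R+j$ from $D$, which is auxiliary of type $(1,l,\omega-1)_{r,l}$. For the major arc set $\cal B_0$, I estimate the first $\omega$ quadratic factors trivially (costing $P^{2\omega}$), observe that the remaining $l-\omega$ quadratic factors factorise into one-variable integrals totalling $P^{l-\omega}$, and bound the $\omega$-fold major arc integral of the quartic factors by $P^{\omega+\eps}$ via \eqref{M-bd}; the product is $P^{l+2\omega+\eps}$. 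The main delicacy is verifying that the row operations diagonalising $[1,\omega]\times[R+1,R+\omega]$ do not disturb the diagonal structure of the $U$-part in rows $\omega+1,\ldots,l$, which is immediate since those rows initially vanish in the columns $R+1,\ldots,R+\omega$ being acted upon; beyond this, no genuine obstacle is expected, the proof being a careful adaptation of the preceding two lemmas.
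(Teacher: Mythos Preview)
Your approach is correct and matches the paper's proof in all three parts: the diagonal base case, the $T_0/T_j$ decomposition via Lemma~\ref{ind1}, and the Hardy--Littlewood dissection into $\cal B_0,\ldots,\cal B_\omega$ as in Lemmas~\ref{ind3}--\ref{ind4}. One small slip in your final paragraph: rows $\omega+1,\ldots,l$ do \emph{not} initially vanish in columns $R+1,\ldots,R+\omega$ (that is precisely what the elimination step must achieve); the correct reason the diagonal block $[\omega+1,l]\times[\omega+1,l]$ survives is that rows $1,\ldots,\omega$ vanish in columns $\omega+1,\ldots,l$ of the $U$-part, so adding their multiples to the lower rows leaves those columns untouched.
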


\begin{pf}
    The quantity $I_{1,l}^{0}(P)$ counts solutions to the system
    \begin{align*}
        d_{i,i} (x_{i,1}^3-x_{i,2}^3) = 0 \qquad (1 \le i \le l)
    \end{align*}
    for some non-zero coefficients $d_{i,i}$. Obviously, the number of solutions to this system is precisely $(2P+1)^l$.

    For the second statement we proceed as in Lemma~\ref{ind1}. Let $D_1$ be auxiliary of type $(1,t,0)_{r,l}$, then $I(P,D_1)$ describes the number of solutions $-P \le x_{j,k} \le P$ to the system
    \begin{align}\label{ind0-sys}
        d_{i,i}(x_{i,1}^3 - x_{i,2}^3) + \sum_{j=t+1}^{2t-l} d_{i,j}(x_{j,1}^3+x_{j,2}^3-x_{j,3}^3-x_{j,4}^3) = 0 \qquad (1 \le i \le t).
    \end{align}
    Write $T_0$ for the number of solutions counted by \eqref{ind0-sys} having $x_{j,1}=x_{j,2}$ for all $1 \le j \le t-l$, and denote by $T_j$  the number of solutions having $x_{j,1} \neq x_{j,2}$. Then we have
    \begin{align*}
        I(P, D_1) \le T_0 + T_1 + \ldots + T_{t-l},
    \end{align*}
    and one sees easily that $T_0 \ll P^{t-l}H_0$, where $H_0$ denotes the number of solutions to the system
    \begin{align*}
        \sum_{j=t+1}^{2t-l} d_{i,j}(x_{j,1}^3+x_{j,2}^3-x_{j,3}^3-x_{j,4}^3) &= 0 \qquad (1 \le i \le t-l), \\
        d_{i,i}(x_{i,1}^3 - x_{i,2}^3) + \sum_{j=t+1}^{2t-l} d_{i,j}(x_{j,1}^3+x_{j,2}^3-x_{j,3}^3-x_{j,4}^3) &= 0 \qquad (t-l+1 \le i \le t).
    \end{align*}
    We may apply elementary row operations to diagonalise the submatrix $[1, t-l] \times [t+1,2t-l]$, and use this diagonal matrix in order to eliminate all entries in the submatrix $ [t-l+1, t] \times [t+1,2t-l]$.
    The rows $t-l+1, \ldots, t$ are now empty but for the diagonal matrix $[t-l+1, t] \times [t-l+1, t]$, and thus correspond to the system of equations
    \begin{align*}
        d_{i,i}(x_{i,1}^3 - x_{i,2}^3) = 0 \qquad (t-l+1 \le i \le t)
    \end{align*}
    having precisely $(2P+1)^{l}$ solutions. It remains to bound the number $N$ of solutions to the system corresponding to the matrix $[1,t-l] \times [t+1,2t-l]$ consisting of the first $t-l$ rows of $D_1$. This matrix is now diagonal, and it follows from Hua's Lemma~\cite[Lemma~2.5]{V:HL} that
    \begin{align*}
        N \ll \prod_{i=1}^{t-l} \int_0^1|g(d_{i, t+i}\eta)|^4 \D \eta \ll P^{2(t-l)+\eps}.
    \end{align*}
    Thus we conclude that $H_0 \ll P^{2(t-l)+\eps} P^{l}$ and thus $T_0 \ll P^{3t-2l+\eps}$, which is in accordance with the enunciation of the lemma.
    In order to bound $T_j$ for $j \ge 1$, it remains to observe that the argument around equation \eqref{T(h)} of Lemma~\ref{ind1} applies unchanged and leads to the bound $T_j \ll P^{\eps}I_{1,t-1}^{1}(P)$. This proves the second statement of the lemma.

    It remains to establish the third statement. Let $D_2$ be auxiliary of type $(1,l,\omega)_{r,l}$, and recall the definition of the major and minor arcs from \eqref{M-def}. Since the submatrix $[1, \omega] \times[l+1, l+\omega]$ of $D_2$ is non-singular, we may diagonalise it and use it to eliminate all entries in $[\omega+1, l] \times [l+1, l+\omega]$. This allows us to write $\theta_{l+j} = d_{j, l+j} \eta_j$ for $1 \le j \le \omega$ in \eqref{chvar}, while leaving the diagonal matrix $[\omega+1, l] \times [\omega+1, l]$ intact. For $1 \le j \le \omega$ write $\cal B_j$ for the set of $\bm \eta \in \T^l$ with $\eta_{j} \in \F m $, and let $\cal B_0$ denote the complementary set where $\eta_{j} \in \F M$ for $1 \le j \le \omega$. Then we have
    \begin{align*}
        I(P,D_2) \ll I(P,D_2; \cal B_0) + I(P,D_2; \cal B_1) + \ldots + I(P,D_2; \cal B_\omega),
    \end{align*}
    and just like in Lemma~\ref{ind3} it follows from \eqref{m-bd} that $I(P,D_2; \cal B_j) \ll P^{3+\eps} I_{1,l}^{\omega-1}(P)$ for $1 \le j \le \omega$.
    It thus remains to bound the contribution from the major arcs $\cal B_0$. Observe that the matrix $[1,l] \times [\omega+1, l+\omega]$ decomposes into two diagonal matrices in $[\omega+1,l] \times [\omega+1,l]$ and $[1, \omega] \times[l+1, l+\omega]$. Hence estimating the exponential sums $g(\theta_1), \ldots, g(\theta_\omega)$ trivially leads to the bound
    \begin{align*}
        I(P,D_2; \cal B_0) &\ll P^{2\omega}\left(\prod_{i=\omega+1}^l \oint  |g(d_{i,i}\eta)|^2 \D \eta \right) \left(\prod_{i=1}^\omega \int_{\F M}|g(d_{i, l+i}\eta)|^4  \D \eta \right)\\
        & \ll P^{2\omega} P^{l-\omega} P^{\omega+\eps} \ll P^{l+2\omega+\eps},
    \end{align*}
    where we applied \eqref{M-bd} in the second step. This completes the proof of the lemma.
\end{pf}

\begin{proof}[Proof of Proposition \ref{silly}]
    This is now swiftly completed and follows from an inductive argument using Lemmata \ref{ind1}, \ref{ind3} and \ref{ind4}. The basis for this induction is provided by Lemma~ \ref{ind0}, which together with Lemma~\ref{ind3} establishes the hypothesis for all auxiliary matrices of type $(1,t,\omega)_{r,l}$.

    The inductive step decomposes into an outer induction on $n$ and an inner induction on $t$ and $\omega$. For the outer induction we observe that an auxiliary matrix of type $(n, l, 0)_{r,l}$ can also be viewed as being of type $(n-1, r,0)_{r,l}$. Since also
    \begin{align*}
        3((n-1)(r-l)+l)-2l = 3((n-2)(r-l)+r)-2l,
    \end{align*}
    it follows that the inductive hypothesis holds for all auxiliary matrices of type $(n,l,0)_{r,l}$ whenever it holds for all auxiliary matrices of type $(n-1, r,0)_{r,l}$. Furthermore, if the inductive hypothesis is known for all matrices of type $(n',t',\omega')_{r,l}$ having either $n' < n$ or $n'=n$, $t'=l$ and $\omega'<\omega$, Lemma~ \ref{ind4} shows that it also holds for all auxiliary matrices of type $(n,l,\omega)_{r,l}$ with $\omega \ge 1$.

    For the inner induction we define an ordering on the pairs $(t,\omega)$ by setting $(t,\omega) \succ (t', \omega')$ if either $t+\omega > t'+\omega'$, or $t+\omega = t'+\omega'$ and $t > t'$. Now suppose that the inductive hypothesis is known for all auxiliary matrices of type $(n',t',\omega')_{r,l}$ having either $n' < n$ or $n'=n$ and $(t',\omega') \prec (t, \omega)$. Then according to the value of $\omega$ one of Lemmata \ref{ind1} and \ref{ind3} is applicable and implies that the desired bound holds for matrices of type $(n,t,\omega)_{r,l}$ as well. This proves the inner inductive step.
\end{proof}

\section{Complification}

In this section we describe the complification process employed in the proof.
Let $n$, $r$ and $l$ be positive integers with $r \ge 2l$, and let $\rho=\rho_n=n(r-l)$. Consider integral matrices $D_n^{(2)}$ and $D_n^{(3)}$ of respective format $l \times (2\rho_n+l)$ and $(\rho_n+l) \times (2\rho_n+l)$, where $D_n^{(2)}$ is highly non-singular and $D_n^{(3)}$ is auxiliary of type $(n,r,0)_{r,l}$. For ease of notation in the following arguments, we will label the rows of the matrix $D_n^{(2)}$ by $\rho+1, \ldots, \rho+l$, so the matrices have column vectors $\B d_j^{(k)} = (d^{(k)}_{i,j})_{i}$ where $\rho+1 \le i \le \rho+l$ if $k=2$ and $1 \le i \le \rho+l$ if $k=3$. Also, define $\gamma_{k,j} = \gamma_{k,j}(\ba)$ by
\begin{align*}
    \gamma_{3,j}(\ba) = \sum_{i=1}^{\rho+l}d_{i,j}^{(3)}\alpha_{3,i} \quad \text{ and } \quad \gamma_{2,j}=  \sum_{i=\rho+1}^{\rho+l}  d_{i,j}^{(2)}\alpha_{2,i} \qquad (1 \le j \le 2\rho+l).
\end{align*}
We abbreviate $\ba_i = (\alpha_{3,i}, \alpha_{2,i})$ for $\rho+1 \le i \le \rho+l$ and write $\ba^{(k)}$ for the vector $(\alpha_{k,i})_i$  with $k \in \{2,3\}$. Furthermore, let $\widetilde{\ba} = (\alpha_{3,1}, \ldots, \alpha_{3,\rho})$ and $\ba^{\dagger} = (\ba_{\rho+1}, \ldots, \ba_{\rho+l})$.

Define the Weyl sum
\begin{align*}
    f(\alpha,\beta) = \sum_{x \in [-P,P]}e(\alpha x^3 + \beta x^2),
\end{align*}
and write $F_{a}^{b}(\ba)= \prod_{i=a}^b f(\bg_i)$. We consider the family of mean values
\begin{align}\label{J-def}
    J_n(P) = \begin{cases}\displaystyle{\oint |F_{1}^{r}(\ba)|^2 |F_{r+1}^{r+l}(\ba)|^{12} |F_{r+l+1}^{2r-l}(\ba)|^4   \D \ba} & \text{ for }n=1,\\ \displaystyle{\oint  |F_{1}^{\rho+l}(\ba)|^2 |F_{\rho+l+1}^{\rho+2l}(\ba)|^8 |F_{\rho+2l+1}^{2\rho}(\ba)|^4 |F_{2\rho+1}^{2\rho+l}(\ba)|^8 \D \ba} & \text{ for }n\ge 2.\end{cases}
\end{align}
For future use we record the trivial inequality
\begin{align}\label{trivineq}
    |x_1 \cdot \ldots \cdot x_n| \le |x_1|^n + \ldots + |x_n|^n
\end{align}
as well as the mean value
\begin{align}\label{mv23}
    \int_0^1 \int_0^1|f(\alpha, \beta)|^{10} \D \alpha \D \beta \ll P^{31/6+\eps}
\end{align}
due to Wooley \cite[Theorem 1.3]{W:15sae5}.

We now establish our iterative complifcation argument.
\begin{lem}\label{comp}
    Suppose that $D_n^{(3)}$ is an auxiliary matrix of type $(n,r,0)_{r,l}$, and $D_n^{(2)}$ is of format $l \times (2\rho_n+l)$. Then there exists an auxiliary matrix $D_{2n}^{(3)}$ of type $(2n,r,0)_{r,l}$ and a matrix $D_{2n}^{(2)}$ of format $l \times (4\rho_n+l)$ such that
    \begin{align*}
        J_n(P) \ll (P^{(31/6) l + \eps})^{1/2} J_{2n}(P)^{1/2}.
    \end{align*}
\end{lem}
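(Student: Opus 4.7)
My plan is to apply Cauchy--Schwarz once to $J_n(P)$ so that one factor is controllable by the tenth-moment bound \eqref{mv23} and the other equals the complified mean value $J_{2n}(P)$. The structural backbone of the argument is the decomposition $\ba = (\widetilde\ba, \ba^\dagger)$ with $\widetilde\ba = (\alpha_{3,1},\ldots,\alpha_{3,\rho})$ collecting the ``bulk'' cubic variables and $\ba^\dagger = (\ba_{\rho+1}, \ldots, \ba_{\rho+l})$ collecting the $l$ linking cubic--quadratic pairs. The complification amounts to gluing two copies of $J_n$ along the shared $\ba^\dagger$; at the level of matrices, $D_{2n}^{(3)}$ is obtained by concatenating two copies of $D_n^{(3)}$ and identifying the $l$ bottom (linking) rows of the first with the top $l$ rows of the second. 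In the resulting block structure, copy~1's $A_1, B_2, \ldots, B_n$ become $A_1^{(2n)},\ldots,B_n^{(2n)}$, while copy~2's $A_1,B_2,\ldots,B_n$ become $B_{n+1}^{(2n)},\ldots,B_{2n}^{(2n)}$ (copy~2's $A_1$ playing the role of $B_{n+1}$). The totally non-singular condition on each $V_k$ of $D_{2n}^{(3)}$ is inherited directly from that of the original $V_k$; $D_{2n}^{(2)}$ is constructed analogously from two copies of $D_n^{(2)}$.

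The Cauchy--Schwarz inequality I would apply is
\begin{align*}
J_n(P) \le \left(\oint \Phi_1^2 \D \ba\right)^{1/2}\left(\oint \Phi_2^2 \D \ba\right)^{1/2},
\end{align*}
with $\Phi_1\cdot \Phi_2$ equal to the integrand of $J_n(P)$ and the split engineered so that $\Phi_1^2$ contains a tenth power of the Weyl sums on the $l$ columns $j\in[\rho+l+1,\rho+2l]$, the first $l$ columns of $A_1$. On these columns, the submatrix of $D_n^{(3)}$ restricted to rows $[1,r]$ has rank $l$ by total non-singularity of $A_1$, and the corresponding submatrix of $D_n^{(2)}$ is invertible by high non-singularity. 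Selecting non-singular $l\times l$ submatrices and making the substitution $(\alpha_{3,[1,l]},\alpha_{2,[\rho+1,\rho+l]})\mapsto(\delta_{3,j},\delta_{2,j})_{j}$ decouples the chosen columns, reducing the first factor to a product of tenth moments:
\begin{align*}
\oint \Phi_1^2 \D \ba \ll \prod_{j=\rho+l+1}^{\rho+2l}\int_0^1\!\int_0^1 |f(\delta_{3,j},\delta_{2,j})|^{10}\D \delta_{3,j}\D \delta_{2,j} \ll P^{(31/6)l+\eps},
\end{align*}
where the final bound is $l$ applications of \eqref{mv23}.

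The hard part will be showing that $\oint \Phi_2^2 \D\ba$ equals $J_{2n}(P)$ for the auxiliary matrices just constructed. Expanded by orthogonality, $\Phi_2^2$ should introduce a second ``copy'' $\widetilde\ba'$ of the bulk cubic variables, so that the integration effectively takes place over the doubled domain $(\widetilde\ba,\widetilde\ba',\ba^\dagger)$ of dimension $2\rho+2l$, matching that of $J_{2n}(P)$. The combinatorial crux is to verify that the doubled integrand, interpreted in the column partition of $D_{2n}^{(3)}$, realizes the exponent pattern $(2,8,4,8)$ that defines $J_{2n}(P)$---in particular, that the $l$ shared identity columns and the $2l$ ``gluing'' $V$-columns (where copy~1's last $V$-block meets copy~2's first) acquire exponents consistent with the block structure of $D_{2n}^{(3)}$, rather than the naively doubled exponents. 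Once this matching is established, the tenth-moment estimate on the first factor combines with $\oint\Phi_2^2 \D\ba = J_{2n}(P)$ on the second to yield the claimed inequality.
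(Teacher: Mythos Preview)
Your overall strategy is right, but the Cauchy--Schwarz step as you describe it cannot produce the complification. If $\Phi_1,\Phi_2$ are functions of $\ba$ and you apply Schwarz over the full $(\rho+2l)$-dimensional cube, then $\oint\Phi_2^2\,\D\ba$ is still an integral over $\T^{\rho+2l}$: squaring the \emph{integrand} does not introduce a second copy $\widetilde\ba'$, and the result cannot coincide with $J_{2n}(P)$, which is an integral over $\T^{2\rho+2l}$. Your remark that ``expanded by orthogonality, $\Phi_2^2$ should introduce a second copy $\widetilde\ba'$'' is the point of confusion---orthogonality reinterprets an integral as a solution count, it does not enlarge the domain of integration. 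The mechanism in the paper is to first integrate out $\widetilde\ba$ to form the inner integral
\[
W(P;\ba^\dagger)=\oint |F_1^\rho|^2|F_{\rho+l+1}^{\rho+2l}|^8|F_{\rho+2l+1}^{2\rho+l}|^4\,\D\widetilde\ba,
\]
and only then apply Schwarz so that one of the resulting factors contains $W(P;\ba^\dagger)^2$. Because $W$ is itself an integral, $W^2=\oint_{\widetilde\ba}\oint_{\widetilde\ba'}(\cdots)$ genuinely introduces the independent copy $\widetilde\ba'$, and $\oint|F_{\rho+1}^{\rho+l}|^2W^2\,\D\ba^\dagger$ lives on $\T^{2\rho+2l}$ as required.

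There is a second, related problem with your choice of tenth-moment columns. You propose $[\rho+l+1,\rho+2l]$ (the first $l$ columns of $V_1$, exponent $8$). The paper instead uses the last $l$ diagonal columns $[\rho+1,\rho+l]$ (exponent $2$, depending only on $\ba^\dagger$) together with the last $l$ columns $[2\rho+1,2\rho+l]$ of $V_n$, splitting $|F_{2\rho+1}^{2\rho+l}|^8=|F|^4\cdot|F|^4$ between the $V_n$-factor and $W$. These columns sit exactly at the ``hinge'' where, after reflecting one copy of $D_n^{(3)}$ and gluing along the shared rows $\ba^\dagger$, the two copies meet; stripping them out of $W$ is precisely what makes the doubled matrix auxiliary of type $(2n,r,0)_{r,l}$ and produces the exponent pattern $(2,8,4,8)$ defining $J_{2n}$. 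With your column choice the residual structure would not match this pattern even if the doubling were carried out correctly.
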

\begin{pf}
    As a consequence of the definition of auxiliarity the exponential sums $f(\bg_i)$ with $1 \le i \le \rho$ depend only on $\alpha_{3,1}, \ldots, \alpha_{3,\rho}$, and $f(\bg_i)$ with $\rho+1 \le i \le \rho+l$ depend only on $\ba_{\rho+1}, \ldots, \ba_{\rho+l}$.
    Write now
    \begin{align*}
        V_n(P)= \begin{cases}
        \displaystyle{\sup_{\widetilde{\ba} \in \T^{r-l}}\oint |F_{r-l+1}^{r}(\bg)|^2  |F_{r+1}^{r+l}(\bg)|^8   \D \ba^\dagger} & \text{ for }n=1,\\ \displaystyle{\sup_{\widetilde{\ba}\in \T^{\rho}} \oint |F_{\rho+1}^{\rho+l}(\bg)|^2  |F_{2\rho+1}^{2\rho+l}(\bg)|^{8}   \D \ba^{\dagger}} & \text{ for }n\ge 2,
        \end{cases}
    \end{align*}
    and
    \begin{align*}
        W(P;\ba^{\dagger})&= \oint   |F_{1}^{\rho}(\bg)|^2 |F_{\rho+l+1}^{\rho+2l}(\bg)|^8 |F_{\rho+2l+1}^{2\rho+l}(\bg)|^4    \D \widetilde{\ba},
    \end{align*}
    then by Schwarz' inequality one has
    \begin{align}\label{J-cs}
        J_n(P) \ll V_n(P)^{1/2} \left(\oint |F_{\rho+1}^{\rho+l}(\ba^{\dagger})|^{2} W(P;\ba^{\dagger} )^2 \D  \ba^{\dagger}\right)^{1/2}.
    \end{align}
    We first consider the integral $V_n(P)$. It follows from the triangle inequality that the supremum in the expression for $V_n(P)$ is assumed at $\widetilde{\ba} = \bm 0$; we may therefore neglect all but the lowest $l$ rows in the coefficient matrices $D_n^{(2)}$ and $D_n^{(3)}$. Since $D_n^{(2)}$ is highly non-singular and $D_n^{(3)}$ is auxiliary of type $(n,r,0)_{r,l}$, the submatrices of $D_n^{(2)}$ and $D_n^{(3)}$ given by the last $l$ rows and the columns $r-l+1, \ldots, r+l$ for $n=1$ and $\rho+1, \ldots, \rho+l, 2\rho+1, \ldots, 2\rho+l$ for $n \ge 2$ are still highly non-singular. We may thus apply \eqref{trivineq} and perform a non-singular change of variables, after which an application of \eqref{mv23} leads to the bound
    \begin{align*}
        V_n(P)&\ll \oint \prod_{i=1}^{l} |f(\ba_{\rho+i})|^{10} \ba^{\dagger}\ll \left( \oint |f(\alpha, \beta)|^{10} \D \alpha \D \beta \right)^l \ll P^{(31/6)l+\eps}.
    \end{align*}

    Meanwhile, expanding the square shows that
    \begin{align*}
        W(P;\ba^{\dagger} )^2 = \oint |F_1^\rho(\widehat \bg)|^2|F_{\rho+l+1}^{2\rho+l}(\widehat \bg)|^2 |F_{2\rho+l+1}^{2\rho+2l}(\widehat \bg)|^8 |F_{2\rho+2l+1}^{4\rho}(\widehat \bg)|^4  |F_{4\rho+1}^{4\rho+l}(\widehat \bg)|^8 \D \widetilde{\ba} \D \widetilde{\ba}',
    \end{align*}
    where $\widetilde{\ba}' = (\alpha'_{3,\rho}, \ldots, \alpha'_{3,1})$ and
    \begin{align*}
        \widehat{\bg}_{i}(\ba) =
        \begin{cases}
            \bg_i(\widetilde{\ba} ,\ba^\dagger) & \text{ if } 1 \le i \le \rho+l, \\
            \bg_{2\rho+l+1-i}( \ba^{\dagger},\widetilde{\ba}') & \text{ if } \rho+l+1 \le i \le 2\rho+l,\\
            \bg_{i-\rho}(\widetilde{\ba} ,\ba^\dagger) & \text{ if } 2\rho+l+1 \le i \le 3\rho+l, \\
            \bg_{5\rho+2l+1-i}( \ba^{\dagger},\widetilde{\ba}') & \text{ if } 3\rho+l+1 \le i \le 4\rho+l.
        \end{cases}
    \end{align*}
    It follows that
    \begin{align*}
        &\oint |F_{\rho+1}^{\rho+l}(\ba^{\dagger})|^{2} W(P;\ba^{\dagger} )^2 \D  \ba^{\dagger} \\
        &= \oint |F_1^{2\rho+l}(\widehat \bg)|^2 |F_{2\rho+l+1}^{2\rho+2l}(\widehat \bg)|^8 |F_{2\rho+2l+1}^{4\rho}(\widehat \bg)|^4  |F_{4\rho+1}^{4\rho+l}(\widehat \bg)|^8 \D \widetilde{\ba}  \D\ba^{\dagger} \D \widetilde{\ba}'.
    \end{align*}

    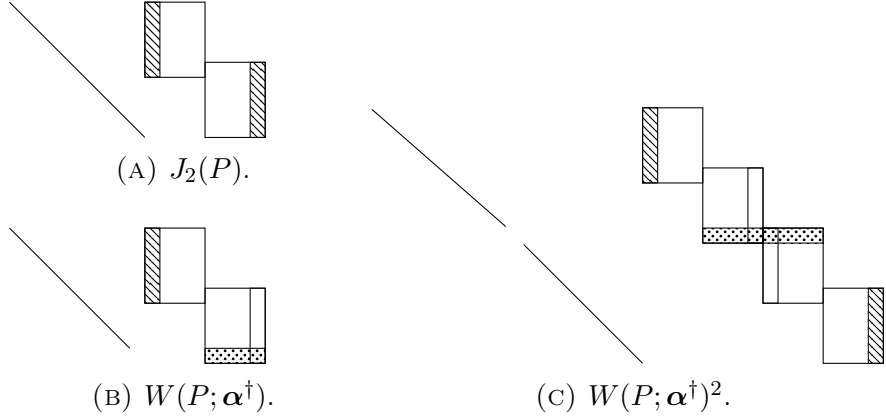
\begin{figure}
     \centering
    \begin{minipage}[b]{0.3\textwidth}
        \begin{subfigure}[b]{\linewidth}
            \begin{tikzpicture}
                \draw (0,1)-- (1.8, -0.8);
                \draw (1.8,0) rectangle (2.6,1);
                \draw [pattern=north west lines] (1.8,0) rectangle (2,1);
                \draw (2.6,0.2) rectangle (3.4,-0.8);
                \draw [pattern=north west lines] (3.2,0.2) rectangle (3.4,-0.8);
            \end{tikzpicture}
            \caption{$J_2(P)$.}
        \end{subfigure}\\[\baselineskip]
        \begin{subfigure}[b]{\linewidth}
            \begin{tikzpicture}
                \draw (0,1) -- (1.6, -0.6);
                \draw (1.8,0) rectangle (2.6,1);
                \draw [pattern=north west lines] (1.8,0) rectangle (2,1);
                \draw (2.6,0.2) rectangle (3.4,-0.8);
                \draw (3.2,0.2) rectangle (3.4,-0.8);
                \draw [pattern=crosshatch dots](2.6,-0.6) rectangle (3.4,-0.8);
            \end{tikzpicture}
            \caption{$W(P;\ba^{\dagger})$.}
        \end{subfigure}
    \end{minipage}
    \begin{subfigure}[b]{0.45\textwidth}
    \begin{tikzpicture}
        \draw (0,1.78) -- (1.78, 0.22);
        \draw (2.02,-0.02) -- (3.6, -1.6);
        \draw (3.6,1.8) rectangle (4.4,0.8);
        \draw (4.4,1) rectangle (5.2,0);
        \draw (5.2,0.2) rectangle (6,-0.8);
        \draw (6,-0.6) rectangle (6.8,-1.6);
        \draw [pattern=north west lines] (3.6,1.8) rectangle (3.8,0.8);
        \draw [pattern=crosshatch dots](4.4,0.2) rectangle (5.2,0);
        \draw (5,1) rectangle (5.2,0);
        \draw (5.2,0.2) rectangle (5.4,-0.8);
        \draw [pattern=crosshatch dots](5.2,0.2) rectangle (6,0);
        \draw [pattern=north west lines] (6.6,-0.6) rectangle (6.8,-1.6);
    \end{tikzpicture}
    \caption{$W(P;\ba^{\dagger})^2$.}
    \end{subfigure}
    \caption{\footnotesize{Schematic representation of the inductive step in the generic case. The matrices correspond to the cubic subsystem. The hatched parts denote to exponential sums to the eighth power, and the dotted rows designate the integrating variables $\ba^{\dagger}$ that do not occur in the integral. The matrix corresponding to the integral $W(P;\ba^{\dagger})$ is obtained from $J_2(P)$ by deleting the last $l$ diagonal elements and reducing the power from 8 to 4 in the last $l$ columns. Squaring it amounts to flipping the matrix (minus the lowest $l$ rows) upside down, but keeping the weights encoded in the dotted part intact. The mean value $J_4(P)$ is now obtained by re-introducing the exponential sums $|F_{\rho+1}^{\rho+l}(\ba^{\dagger})|^{2}$ corresponding to the missing part of the diagonal component and integrating over the corresponding rows.}}
    \end{figure}
    The matrices $\widehat D^{(2)} = (\widehat d^{\;(2)}_{i,j})$ and $\widehat D^{(3)} = (\widehat d^{\;(3)}_{i,j})$ associated to $\widehat \bg_1, \ldots, \widehat \bg_{4\rho+l}$ are of respective formats $l \times (4\rho_n+l)$ and $(2\rho_n+l) \times (4\rho_n+l)$, and the latter is auxiliary of type $(2n, r, 0)_{r,l}$, so the last integral is just $J_{2n}(P)$. This completes the proof of the lemma.
\end{pf}

We can now proceed to prove the mean value estimate that is central to our methods. Lemma~\ref{comp} provides us with the iterating step, which allows us to show that the mean value $J_1(P)$ is subject to nearly square root cancellation.

\begin{thm}\label{K-thm}
    Suppose that the matrices $D^{(3)}$ and $D^{(2)}$ are both highly non-singular and of respective formats $r \times (6r +4l)$ and $l \times (6r +4l)$. We have
    \begin{align*}
        \oint \prod_{i=1}^{6r+4l} |f(\bg_i)| \D \ba \ll P^{3r + \frac{13}{6} l + \eps}.
    \end{align*}
\end{thm}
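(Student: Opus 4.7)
The plan is to bound $K(P) := \oint \prod_{i=1}^{6r+4l} |f(\bg_i)| \D \ba$ by reducing it to $J_1(P)$ via H\"older's inequality, then iterating Lemma~\ref{comp} to express $J_1(P)$ in terms of $J_{2^k}(P)$, and finally establishing essentially square-root cancellation for $J_n(P)$ uniformly in $n$ by means of Proposition~\ref{silly}.

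\emph{Step 1 (H\"older reduction).} The integrand of $J_1(P)$ has total weight $2r + 12l + 4(r-2l) = 6r+4l$, matching the number of unit-weight factors in $K(P)$. I would decompose $\prod_{i=1}^{6r+4l} |f(\bg_i)| = \prod_{k=1}^{12} F_k$, where $F_k = \prod_i |f(\bg_i)|^{a_i^{(k)}}$ with $a_i^{(k)} \in \{0, 1/6, 1/3, 1\}$ and $\sum_k a_i^{(k)} = 1$. A combinatorial assignment---feasible thanks to the hypothesis $r \ge 2l$---places in each copy $k$ exactly $r$ columns in the weight-$2$ role (where $12\,a_i^{(k)} = 2$), $l$ columns in the weight-$12$ role, and $r-2l$ columns in the weight-$4$ role, reproducing the $J_1(P)$-integrand on some $(2r-l)$-column subset $S_k$. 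H\"older's inequality with exponents $p_k = 12$ then gives $K(P) \le \prod_{k=1}^{12} (\oint F_k^{12} \D \ba)^{1/12}$. Since $D^{(3)}|_{S_k}$ and $D^{(2)}|_{S_k}$ inherit high non-singularity, Lemma~\ref{hns-prop}(ii) together with the invariance of the integral under unimodular row operations lets us bring the cubic submatrix into auxiliary form of type $(1, r, 0)_{r, l}$; consequently $\oint F_k^{12} \D \ba \ll J_1(P)$ for each $k$, so $K(P) \ll J_1(P)$.

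\emph{Step 2 (iteration).} Iterating Lemma~\ref{comp} $k$ times produces
\[
    J_1(P) \ll P^{(31l/6)(1 - 2^{-k}) + \eps}\, J_{2^k}(P)^{2^{-k}}.
\]
To close the argument, I need a uniform bound $J_n(P) \ll P^{3\rho_n + C + \eps}$ with $C$ independent of $n$. Under such a bound $J_{2^k}(P)^{2^{-k}} \to P^{3(r-l) + \eps}$ as $k \to \infty$, which yields $J_1(P) \ll P^{31l/6 + 3(r-l) + \eps} = P^{3r + 13l/6 + \eps}$; combined with Step~1 this gives $K(P) \ll P^{3r + 13l/6 + \eps}$ as required.

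\emph{Step 3 (base bound for $J_n$).} The bound $J_n(P) \ll P^{3\rho_n + 9l + \eps}$---which reflects the expected asymptotic solution count of the underlying cubic-quadratic system---would be established by a Cauchy-Schwarz argument in the spirit of the proof of Lemma~\ref{comp}: one separates the weight-$8$ blocks $|F_{\rho+l+1}^{\rho+2l}|^8$ and $|F_{2\rho+1}^{2\rho+l}|^8$, controls them via the mixed Wooley mean value~\eqref{mv23}, and reduces the remaining pure cubic mean value to one of the form $I(P,D)$ governed by Proposition~\ref{silly}. I expect this to be the main obstacle: the decoupling of the cubic and quadratic variables in $J_n(P)$, uniformly in $n$, must be performed with some care so that both Wooley's mean value and Proposition~\ref{silly} can be brought to bear. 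The combinatorics of Step~1 and the iteration of Step~2 are comparatively mechanical once the base bound is in place.
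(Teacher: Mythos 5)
Your overall architecture matches the paper's: reduce $K(P)$ to a $J_1$-shaped mean value, iterate Lemma~\ref{comp} $m$ times to pass to $J_{2^m}(P)^{2^{-m}}$, and close with a bound on $J_n(P)$ that is ``$P^{3\rho_n + C}$ with $C$ independent of $n$'' so that the excess exponent decays like $2^{-m}$. You even land on the right constant $C = 9l$. Step 1 is correct in spirit; the paper uses the elementary inequality \eqref{trivineq} plus relabelling rather than a H\"older decomposition into twelve copies, but these are interchangeable devices, and in both cases one only needs that any $2r-l$ columns of $D^{(2)}$ and $D^{(3)}$ remain highly non-singular (granted since $r \ge 2l$, hence $2r - l \ge r \ge l$). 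Step 2 is verbatim the paper's iteration.

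The genuine gap is in Step 3, which you flag as ``the main obstacle.'' In the paper this step is in fact the \emph{easiest} part of the proof, and your proposed route via a further Cauchy--Schwarz and the mean value \eqref{mv23} is both unnecessary and unlikely to close cleanly: squaring the weight-$8$ blocks produces $|f|^{16}$-type integrals that \eqref{mv23} (a tenth-moment bound) does not control, while leaving the remaining factor with the wrong exponent pattern $(4,8)$ for an application of Proposition~\ref{silly}. The paper's argument is much blunter. For $n \ge 2$ one simply bounds the extra fourth powers trivially, $|F_{\rho+l+1}^{\rho+2l}|^4 |F_{2\rho+1}^{2\rho+l}|^4 \ll P^{8l}$, leaving an integral with exponent pattern exactly $(2,\dots,2,4,\dots,4)$. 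One then drops the quadratic equations altogether --- by orthogonality the residual integral counts solutions of the combined cubic--quadratic system, which is at most the number of solutions of the cubic subsystem alone --- and what remains is precisely $I(P, D_n^{(3)})$ for the auxiliary matrix $D_n^{(3)}$ of type $(n,r,0)_{r,l}$. Proposition~\ref{silly} gives $I(P, D_n^{(3)}) \ll P^{3\rho_n + l + \eps}$, whence $J_n(P) \ll P^{3\rho_n + 9l + \eps}$. No Cauchy--Schwarz and no reference to \eqref{mv23} is needed at this stage; \eqref{mv23} and Cauchy--Schwarz live entirely inside the proof of Lemma~\ref{comp}, which you are already invoking in Step 2. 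So your plan is correct in outline and even numerically, but you overcomplicate the one step you flag as hard, and the mechanism you sketch for it would not run as stated.
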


\begin{pf}
    It follows from \eqref{trivineq} and relabelling that
    \begin{align*}
        \oint \prod_{i=1}^{6r+4l} |f(\bg_i)| \D \ba \ll \oint \prod_{i=1}^r |f(\bg_i)|^2 \prod_{i=r+1}^{r+l} |f(\bg_i)|^{12} \prod_{i=r+l+1}^{2r-l} |f(\bg_i)|^4   \D \ba.
    \end{align*}
    The coefficient matrices of the diophantine system associated to the integral on the right hand side are still highly non-singular. Hence by taking elementary row operations and invoking Lemma~\ref{hns-prop} (ii), we see that the number of solutions to this system is given by a mean value of the shape $J_1(P)$ for suitable matrices $D_1^{(2)}$ and $D_1^{(3)}$, where $D_1^{(2)}$ is highly non-singular of format $l \times (2r-l)$ and $D_1^{(3)}$ is auxiliary of type $(1,r,0)_{r,l}$.
    We may thus deploy Lemma~\ref{comp} which, after $m$ iterations, yields the bound
    \begin{align*}
        J_1(P) \ll (P^{(31/6)l + \eps})^{1-2^{-m}} J_{2^m}(P)^{2^{-m}}
    \end{align*}
    for suitable matrices $D_{2^m}^{(2)}$ and $D_{2^m}^{(3)}$, where $D_{2^m}^{(3)}$ is auxiliary of type $(2^{m}, r,0)_{r,l}$. When $m \ge 1$, it follows from discarding the quadratic equations and estimating the terms $|F_{\rho+l+1}^{\rho+2l}|^4|F_{2\rho+1}^{2\rho+l}|^4$ trivially that
    \begin{align*}
        J_{2^m}(P) \ll P^{8l} I(P, D_{2^m}^{(3)}).
    \end{align*}
    Combining these estimates and inserting Proposition \ref{silly} yields the bound
    \begin{align*}
        \oint \prod_{i=1}^{6r+4l} |f(\bg_i)| \D \ba  & \ll (P^{\frac{31}{6}l})^{1-2^{-m}} (P^{8l} P^{3 (2^{m}-1)(r-l) + 3r -2l +\eps})^{2^{-m}} \\
        &\ll P^{3r + \frac{13}{6} l +  2^{-m}  \frac{23}{6}l+\eps}.
    \end{align*}
    The result now follows on letting $m$ tend to infinity.
\end{pf}

\section{The Hardy-Littlewood Method}\label{CM}

We now have the means at hand to complete the proof of Theorem~\ref{thm}. The treatment here is a straightforward adaptation of the arguments of \cite[\S 4 and \S 6]{BP:16}. We take $r=r_3$ and $l=r_2$  and write $w=r_3+r_2$. Set
\begin{align}\label{s-bd}
    s \ge 6r_3 + \lfloor (14/3) r_2 \rfloor +1,
\end{align}
and make the change of variables
\begin{align*}
    \gamma_{k,j}(\ba) = \sum_{i=1}^{r_k}c_{i,j}^{(k)}\alpha_{k,i} \quad (1 \le j \le s, k \in \{2,3\}).
\end{align*}
Set $\bg_j = (\gamma_{3,j}, \gamma_{2,j})$, and write $f(\bg_j) = f_j(\ba)$. When $\F B$ is measurable, let
\begin{align*}
    N(P; \F B) = \int_{\F B} \prod_{i=1}^s f_i(\ba) \D \ba.
\end{align*}
We define two sets of major arcs. Let
\begin{align*}
    \F M(q,\B a) = \big\{\ba \in \T^w: |q\alpha_{k,i}-a_{k,i}| \le P^{3/4-k} \qquad (1\le i \le r_k,  k \in \{2,3\})\big\},
\end{align*}
and write $\F M$ for the union of all $\F M(q,\B a)$ with $1 \le \B a \le q$, $(q,\B a)=1$, and $1 \le q \leqslant P^{3/4}$.  We then write $\F m = \T^w \setminus \F M$ for the minor arcs. Set now $X=P^{1/(6w)}$ and define further
\begin{align*}
    \F N = \bigcup_{q=1}^X \bigcup_{\substack{\B a =1  \\ (q, \B a)=1}}^q \F N(q,\B a),
\end{align*}
where $\F N(q,\B a)$ is given by the set of all $\ba  \in \T^w$ satisfying
\begin{align*}
     |\alpha_{k,i}-q^{-1}a_{k,i}| \le XP^{-k} \qquad (1 \le i \le r_k, k \in \{2,3\}),
\end{align*}
and $\F n =\T^w \setminus \F N$. It then follows that
\begin{align*}
    N(P) = N(P;\F N) + O(N(P;\F m )) + O(N(P;\F M\setminus \F N)).
\end{align*}

We first consider the contribution from the minor arcs. Set $\sigma = s-6r_3-4r_2$, then a straightforward modification of the arguments of \cite[\S 6]{BP:16} shows that
\begin{align*}
    N(P;\F m) &\ll P^{(3/4)\sigma + \eps} \oint \prod_{i=1}^{6r_3 + 4r_2} |f(\bg_i)| \D \ba \ll P^{(3/4)\sigma + \eps} P^{3r_3 + \frac{13}{6} r_2 + \eps},
\end{align*}
where the last step uses Theorem \ref{K-thm}.
An easy computation confirms that the exponent is smaller than $s - 2r_2 - 3r_3$ whenever $s>6r_3+(14/3) r_2$ and $\eps$ has been chosen small enough. Similarly, it follows from Lemma~6.2 of \cite{BP:16} that whenever $s$ satisfies \eqref{s-bd}, then one has
\begin{align*}
    N(P;\F M\setminus \F N) \ll P^{s-2r_2-3r_3}X^{-1/(6r_3)}.
\end{align*}
Altogether, we obtain
\begin{align}\label{minbd}
    N(P) = N(P;\F N) + O(P^{s-2r_2-3r_3-\delta})
\end{align}
for some small $\delta>0$. This completes the analysis of the minor arcs $\F n$.

Finally, the treatment of the major arcs is precisely as in \cite[\S 4]{BP:16}. Write $\alpha_{k,i} = q^{-1}a_{k,i} + \beta_{k,i}$, and let
\begin{align*}
    S(q,\B a) = \sum_{x=1}^q e\left((a_3 x^3 + a_2 x^2)/q\right)
\end{align*}
and
\begin{align*}
    v(\bb,P) = \int_{-P}^P e(\beta_3 z^3 + \beta_2 z^2) \D z.
\end{align*}
We make the change of variables
\begin{align*}
    \Lambda_{k,j} = \sum_{i=1}^{r_k}c^{(k)}_{i,j} a_{k,i} \quad \text{and} \quad  \delta_{k,j}= \sum_{i=1}^{r_{k}} c^{(k)}_{i,j} \beta_{k,i} \qquad (1 \le j \le s, k \in \{2,3\})
\end{align*}
and write $\bm \Lambda_{j}=(\Lambda_{3,j},\Lambda_{2,j})$ and $\bm \delta_{j}=(\delta_{3,j},\delta_{2,j})$,
so that $\bd_{j}=\bg_{j}-\B \Lambda_{j}/q$ for $ 1 \le j \le s$. Set $S_j(q,\B a)=S(q,\bm{\Lambda}_j)$ and $v_j(\bb, P)=v(\bd_j, P)$, and for arbitrary $Y$ define the truncated singular series
\begin{align*}
    \F S(Y) = \sum_{q \le Y} \sum_{\substack{1 \leqslant \B a \leqslant q \\ (q,\B a)=1}} \prod_{j=1}^s q^{-1} S_j(q,\B a)
\end{align*}
and singular integral
\begin{align*}
    \F J(Y) = \int_{[-YP^{-2}, YP^{-2}]^{r_2}} \int_{[-YP^{-3}, YP^{-3}]^{r_3}} \prod_{j=1}^s v_j(\bb,P) \D \bb.
\end{align*}
Then it follows from the arguments leading to equation (4.5) of \cite{BP:16} that
\begin{align}\label{majorasymp}
\int_{\F N} f(\bg_1) \cdot \ldots \cdot f(\bg_s) \D \ba = \F S(X) \F J(X) + O(P^{s-2r_2-3r_3-\delta})
\end{align}
for some $\delta > 0$.

One can complete the singular series and singular integral as usual by taking $\F S = \lim_{Y \to \infty} \F S(Y)$ and $\F J=\lim_{Y \to \infty} \F J(Y)$, where $P$ is held fixed. Then it is shown in \cite[Lemmata 4.1 and 4.2]{BP:16} that $\F S - \F S(Y) \ll Y^{-\delta}$ and $\F J - \F J(Y) \ll P^{s-2r_2-3r_3}Y^{-\delta}$ for some $\delta>0$ whenever $s$ satisfies \eqref{s-bd}.
A coordinate transform now shows that $\F J = P^{s-2r_2-3r_3} \chi_{\infty}$ with
\begin{align*}
    \chi_\infty = \int_{\R^r} \int_{[-1,1]^s} e\Big(\sum_{i=1}^{r_3} \beta_{3,i} \Theta_{3,i}(\bm{\zeta}) + \sum_{i=1}^{r_2} \beta_{2,i} \Theta_{2,i}(\bm{\zeta})\Big) \D \bm{\zeta}  \D \bb,
\end{align*}
where $\Theta_{k,i}(\B x) = c_{i,1}^{(k)}x_1^{k} + \ldots + c_{i,s}^{(k)}x_s^{k}$, and our above arguments imply that $\chi_\infty$ is a finite constant. Furthermore, the argument of \cite[Lemma~7.4]{P:02pae} is easily adapted to prove that this constant is positive whenever the system \eqref{sys} possesses a non-singular real solution in the unit hypercube.
Also, a standard argument yields
\begin{align*}
    \chi_p = \sum_{i=0}^\infty A(p^i) = \lim_{i \to \infty} p^{-i(s-r)} M(p^i),
\end{align*}
where $M(p^i)$ denotes the number of solutions of the congruences to the modulus $p^i$ that correspond to the equations \eqref{sys}. It follows from standard arguments that $\chi_p = 1+O(p^{-1}) \ge \tfrac{1}{2}$ for $p$ sufficiently large, and for small primes one uses Hensel's lemma to deduce that $\chi_p>0$ if the system \eqref{sys} possesses a non-singular $p$-adic solution.  The proof of Theorem~\ref{thm} is now complete on recalling \eqref{minbd} and \eqref{majorasymp}, and the constant  is given by $ c = \chi_\infty \prod_{p}\chi_p$.

\bibliographystyle{amsplain}
\bibliography{fullrefs}
\end{document}